\documentclass[11pt, a4paper]{amsart}

\usepackage{amsmath} 
\usepackage{amssymb} 
\usepackage{amscd} 
\usepackage{tabularx} 
\usepackage[all,cmtip,line]{xy}
\usepackage{enumerate}
\usepackage{hyperref}
\usepackage{bm}

\usepackage{amsthm}
\newtheorem{thm}{\textbf{Theorem}}[section]
\newtheorem{defn}[thm]{\textbf{Definition}}
\newtheorem{prop}[thm]{\textbf{Proposition}}
\newtheorem{lem}[thm]{\textbf{Lemma}}
\newtheorem{corollary}[thm]{\text{Corollary}}
\newtheorem{rem}[thm]{\textbf{Remark}}

\newtheorem*{thmn}{\textbf{Theorem}}
\newtheorem*{Defn}{\textbf{Definition}}

\def\Q{\mathbb{Q}}
\def\Z{\mathbb{Z}}
\def\C{\mathbb{C}}
\def\A{\mathbb{A}}

\def\Gal{\operatorname{Gal}}

\def\GL{\operatorname{GL}}
\def\SL{\operatorname{SL}}
\def\PGL{\operatorname{PGL}}

\def\til{\widetilde}

\def\Frob{\operatorname{Frob}}

\def\D{\mathcal{D}}
\def\Qbar{\overline{\mathbb{Q}}}

\newcommand{\Hom}{\operatorname{Hom}}

\newcommand{\cH}{\mathcal{H}}

\addtolength{\textheight}{-1.5cm}
\textwidth = 14cm
\hoffset = -20pt
\setlength{\voffset}{.75cm}

\title{$L$-Indistinguishability on Eigenvarieties}
\author{Judith Ludwig}
\email{jludwig@math.uni-bonn.de}

\begin{document}
\date{\today}
\maketitle
\begin{abstract}
In this article, we construct examples of $L$-indistinguishable overconvergent eigenforms for an inner form of $\SL_2$.
\end{abstract}
\section{Introduction}
The purpose of this article is to show the existence of points on eigenvarieties for an inner form of $\SL_2$, whose associated systems of Hecke eigenvalues come from classical automorphic representations of an inner form of $\GL_2$, but which are \textit{not classical} themselves, in the sense that there are no classical forms for these systems of Hecke eigenvalues in the corresponding spaces of overconvergent forms. By construction there are \textit{overconvergent} forms giving rise to these points. For each such point we also construct a twin on a different eigenvariety which is classical. The overconvergent non-classical form $f$ on the first eigenvariety and the classical form $g$ on the second eigenvariety are $L$-indistinguishable in the sense that they give rise to the same Galois representation. 
Although there is no definition of a global $p$-adic $L$-packet, our results suggest that, for any future definition, $f$ and $g$ should lie in the same $L$-packet. 

We describe our results in more detail:
Let $B/\Q$ be a definite quaternion algebra and denote by $S_B$ the set of primes where $B$ ramifies. 
Let $\til{G}$ be the algebraic group over $\Q$ defined by the units $B^*$ and $G$ the subgroup of elements of reduced norm one. Fix a prime $p \notin S_B$ and a finite extension $E/\Q_p$. 

For $S$ a finite set of places which includes $p$ and $S_B$, we have a Hecke algebra $\til{\cH}_S:=\til{\cH}_{ur,S} \otimes_E \til{\mathcal{A}}_p$ for $\til{G}$, which is the product of the spherical Hecke algebras at all places not in $S$ and an Atkin-Lehner algebra at $p$, and an analogue $\cH_S$ for $G$. Any idempotent $\til{e}=\otimes \til{e}_l \in C^{\infty}_c(\til{G}(\A_f^p), \Qbar)$, such that $\til{e}_l=\mathbf{1}_{\GL_2(\Z_l)} $ for all $l \notin S$, gives rise to an eigenvariety $\D(\til{e})$ of idempotent type $\til{e}$, whose underlying set of points embeds 
\[
\D(\til{e})(\Qbar_p) \hookrightarrow \Hom(\til{\mathcal{H}}_S,\Qbar_p) \times \til{\mathcal{W}}(\Qbar_p),
\]
where $\til{\mathcal{W}}= \Hom_{cts}((\Z_p^*)^2, \mathbb{G}_m)$ denotes the usual weight space.
If $e \in C^{\infty}_c(G(\A_f^p),\Qbar)$ is an idempotent with the same set $S$ of bad places, we have an eigenvariety $\D(e)$ of idempotent type $e$ for $G$, whose underlying set of points embeds
\[
\D(e)(\Qbar_p) \hookrightarrow \Hom(\mathcal{H}_S,\Qbar_p)\times \mathcal{W}(\Qbar_p),
\]
for the corresponding weight space $\mathcal{W} = \Hom_{cts}(\Z_p^*,\mathbb{G}_m)$.
There are natural maps 
\[ \mathcal{H}_S \hookrightarrow \til{\mathcal{H}}_S , \  \til{\mathcal{W}} \rightarrow \mathcal{W}.\]
 
\begin{Defn}[Definition \ref{clpt}]
A point $z$ on an eigenvariety of idempotent type is called \emph{classical}, if there is a classical automorphic eigenform in the corresponding space of overconvergent forms, whose system of Hecke eigenvalues is that defined by~$z$. 
\end{Defn}

Let $\pi(\til{\theta})$ be an algebraic automorphic representation of $\til{G}(\A)$ associated to a Gr\"o{\ss}encharacter $\til{\theta}$ of an imaginary quadratic field $L$. Assume that $p$ splits in $L$. Then such a representation gives rise to two points on $\D(\til{e})$ for a suitable idempotent $\til{e} \in C^{\infty}_c(\til{G}(\A_f^p),E)$, one which is ordinary and one which is of critical slope. 

Let $\til{x}$ be the point of critical slope and consider its image in $ \Hom(\mathcal{H}_S,\Qbar_p) \times \mathcal{W}(\Qbar_p)$ under the composite of the maps

\[\xymatrix{
\D(\til{e})(\Qbar_p) \ar[r] \ar[rd]^\phi &  \ar[d] \Hom(\til{\mathcal{H}}_S,\Qbar_p) \times \til{\mathcal{W}}(\Qbar_p) \\
& \Hom(\mathcal{H}_S,\Qbar_p) \times \mathcal{W}(\Qbar_p) ,  }
\]
which we denote by $\phi$.   

Our main theorem is the following. 
\begin{thmn}[Theorem \ref{mainthm}] There exist automorphic representations $\pi(\til{\theta})$ of $\til{G}(\A)$ as above together with idempotents $\til{e} \in C^{\infty}_c(\til{G}(\A_f^p),\Qbar)$ and $e_1, e_2 \in C^{\infty}_c(G(\A_f^p),\Qbar)$, such that, using the above notation, the image $\phi(\til{x})$ of the critical slope refinement~$\til{x}$ of $\pi(\til{\theta})$ lifts to a non-classical point on the eigenvariety $\D(e_1)$ and to a classical point on $\D(e_2)$. 
\end{thmn}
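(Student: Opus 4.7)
The plan is to transpose Labesse--Langlands' classical $L$-packet theory for inner forms of $\SL_2$ into the setting of overconvergent $p$-adic families. I would begin by choosing the imaginary quadratic field $L$, the Gr\"o{\ss}encharacter $\til{\theta}$ and a finite auxiliary place $q \notin S_B \cup \{p\}$ so that the local packet $\Pi_q$ of the restriction $\pi_q(\til{\theta})|_{G(\Q_q)}$ is reducible, $\Pi_q = \pi_q^1 \oplus \pi_q^2$, and so that the global $L$-packet of $\pi(\til{\theta})|_{G(\A)}$ contains an automorphic member $\pi$ with local component $\pi_q^2$ at $q$ and a non-automorphic member $\pi'$ with local component $\pi_q^1$ at $q$. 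The Labesse--Langlands multiplicity formula makes this achievable by a dihedral construction pairing local reducibility at $q$ with a global parity obstruction killing the wrong constituent. Next, pick idempotents $e_{1,q}, e_{2,q} \in C^{\infty}_c(G(\Q_q),\Qbar)$ cutting out $\pi_q^1$ and $\pi_q^2$ respectively, extend them by hyperspecial level to $e_1, e_2 \in C^{\infty}_c(G(\A_f^p),\Qbar)$, and take $\til{e}$ whose local component at $q$ projects onto $\pi_q(\til{\theta})$, so that both refinements of $\pi(\til{\theta})$ appear on $\D(\til{e})$.

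Two of the three substantive assertions then follow immediately. The classical lift on $\D(e_2)$ is supplied by the critical slope refinement of $\pi$: it is a genuine classical form with Hecke eigenvalues matching $\pi(\til{\theta})$ away from $S$, and so yields a classical point $x_2 \in \D(e_2)(\Qbar_p)$ mapping to $\phi(\til{x})$. Non-classicality of any lift $x_1 \in \D(e_1)(\Qbar_p)$ of $\phi(\til{x})$ is forced by the packet argument in reverse: a hypothetical classical automorphic eigenform of $G$ at idempotent type $e_1$ with those Hecke eigenvalues would have to lie in the $L$-packet of $\pi(\til{\theta})|_{G(\A)}$ and carry non-trivial $e_{1,q}$-invariants at $q$, so it would be $\pi'$, contradicting its non-automorphy.

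The main obstacle is the existence of $x_1$ itself, that is, producing a nonzero overconvergent eigenform for $G$ at idempotent type $e_1$ with Hecke eigenvalues $\phi(\til{x})$. Since $\pi'$ is not automorphic, one cannot simply restrict the classical form $F$ underlying $\til{x}$ to $G(\A)$: the restriction, being automorphic, cannot see $\pi'$. Instead I would work at the level of the overconvergent-form modules defining the eigenvarieties, constructing $x_1$ from the Atkin--Lehner-shifted overconvergent cohomology for $\til{G}$ by applying $e_{1,q}$ in a way that remembers the local reducibility $\Pi_q = \pi_q^1 \oplus \pi_q^2$ without demanding global automorphy. The critical-slope hypothesis at $p$ is essential: in the ordinary case, Coleman's classicality criterion would force $x_1$ to be classical, contradicting the packet argument, so criticality at $p$ is precisely what makes room for an overconvergent form with no classical companion. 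An alternative avenue is Galois-theoretic: $\phi(\til{x})$ determines a Galois representation whose Bella\"iche--Chenevier-type deformation analysis, specialised to critical CM points, realises the required non-classical $p$-adic avatar on $\D(e_1)$. Either way, the key technical step is bridging the classical packet decomposition for $G$ with the $p$-adic eigenvariety $\D(e_1)$, and this is where I expect the substantial work to lie.
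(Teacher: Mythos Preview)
Your setup and the two easier assertions are essentially right and match the paper: the classical lift $x_2$ arises from the automorphic member $\pi_2$ of the packet, and non-classicality of any lift $x_1$ follows from the multiplicity formula exactly as you say. One minor discrepancy: in the paper the auxiliary place $q$ is taken to lie \emph{in} $S_B$ (in fact $S_B=\{q,\infty\}$), not outside it; this matters for verifying that the idempotents are Langlands compatible and for arranging that precisely one of $\pi_1,\pi_2$ is automorphic, though variants with more ramified places are possible.

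The genuine gap is your approach to the existence of $x_1$. Neither of your two suggestions is what the paper does, and neither is developed enough to be convincing. ``Applying $e_{1,q}$ to overconvergent cohomology for $\til{G}$'' runs into the problem you yourself flagged: $e_{1,q}$ is an idempotent on $G(\Q_q)$, so one must first restrict from $\til{G}$ to $G$, and that restriction already lands in the automorphic spectrum of $G$ where $e_{1,q}$ kills the relevant packet member; it is not clear what extra room the overconvergent module gives you here. The Galois-deformation alternative is too vague to evaluate.

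The paper's argument is an interpolation along the eigenvariety, and the key idea you are missing is that \emph{non-CM} classical points are Zariski-dense in a neighbourhood of $\lambda'(\til{x})$ on the auxiliary eigenvariety $\D'(\til{e})$. One fixes a constant-$u_0$-slope affinoid around $\lambda'(\til{x})$ and uses the explicit CM slope computations (split and inert cases) to see that CM points in that affinoid can occur over only finitely many weights; removing the corresponding Zariski-closed fibre leaves a Zariski-dense set of classical points all of which are non-CM. For a non-CM $\til{\pi}$ the $L$-packet $\Pi(\til{\pi})$ is stable, so every member---in particular the one with local component $\tau_1$ at $q$---is automorphic, and hence these points all lie in $\D''(e_1)$. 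Now one invokes the $p$-adic Labesse--Langlands transfer: both $\D'(\til{e})$ and $\D''(e_1)$ sit as closed subvarieties, hence unions of irreducible components, of $\D''(e)$. Any irreducible component of $\D'(\til{e})$ through $\lambda'(\til{x})$ contains a non-CM point lying on no other component; since that point is in $\D''(e_1)$, the entire component is contained in $\D''(e_1)$, and therefore so is $\lambda'(\til{x})$. This density-plus-component-rigidity step is the heart of the proof.
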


The proof of the theorem uses a $p$-adic version of a Labesse--Langlands transfer proved by the author in \cite{p-adicLL}. 
The representations $\pi(\til{\theta})$ and the idempotents~$e_1 $ and~$e_2$ are constructed in such a way, that each $e_i$ \textit{sees} exactly one member of the $L$-packet $\Pi(\pi(\til{\theta}))$, i.e., for $i=1,2$ there exists a unique element $\pi_i \in \Pi(\pi(\til{\theta}))$ such that $e_i (\pi_i)_f^p \neq 0$. Moreover $m(\pi_1)$, the multiplicity of $\pi_1$ in the automorphic spectrum of~$G$, is zero and $\pi_2$ is automorphic. In particular, this implies that $\phi(\til{x})$ lifts to $\D(e_2)$. In order to show that it also lifts to $\D(e_1)$, the crucial point is that in a neighbourhood of $\til{x}$ we can find many points associated to automorphic representations that do not come from a Gr\"o{\ss}encharacter. These automorphic representations of~$\til{G}(\A)$ give rise to stable $L$-packets of $G$ and therefore their images under $\phi$ all lift to~$\D(e_1)$.

By construction, points on eigenvarieties give rise to systems of Hecke eigenvalues occurring in spaces of overconvergent automorphic forms. So in particular the theorem shows the existence of an overconvergent eigenform $f$ of tame level $e_1$, whose system of Hecke eigenvalues $\psi_f$ comes from a classical automorphic representation of $\til{G}(\A)$. The multiplicity formulae of Labesse and Langlands rule out that there is a classical eigenform for $\psi_f$ of the same tame level. 
On the other hand the point on $\D(e_2)$ comes from a classical form, say $g$. The associated Galois representations $\rho_f$ and $\rho_g$ agree as the systems of $\cH_S$-eigenvalues for $f$ and $g$ are the same. In this sense the two forms are $L$-indistinguishable. 
\vspace{3mm}

\textbf{Notation}. Fix embeddings $\iota_{\infty}:\Qbar \hookrightarrow \C$ and $\iota_p:\Qbar \hookrightarrow \Qbar_p$ as well as a finite extension $E/\Q_p$ with ring of integers $\mathcal{O}_E$ and an embedding $E \subset \Qbar_p$. 
For a number field $K$ we denote by $G_K:=\Gal(\overline{K}/K)$ the absolute Galois group of an algebraic closure of $K$. For a finite set $S$ of places of $K$, let $G_{K,S}$ be the Galois group of a maximal extension of $K$ that is unramified outside $S$, and for $v \notin S$, let $\operatorname{Frob}_v \in G_{K,S}$ denote a representative of the geometric Frobenius at $v$.  

We will frequently choose idempotents $e \in C^{\infty}_c(H(\A_f^p),\Qbar)$ where $H$ is equal to either $\til{G}$ or to $G$. We will always assume that $E$ is chosen big enough so that $\iota_p \circ e$ takes values in $E$. We ease notation by dropping the embeddings from the notation when it is obvious, e.g., for a complex representation $\pi_f^p$ of $H(\A_f^p)$ we write $e \cdot \pi_f^p$ instead of $(\iota_\infty \circ e ) \cdot \pi_f^p$. 
Furthermore we assume that the idempotents we consider are given as a tensor product of local idempotents $e_l \in C^{\infty}_c(H(\Q_l),\Qbar)$, where $e_l = \mathbf{1}_{H(\Z_l)}$ for almost all $l$. We denote by $S(e)$ the minimal finite set of finite primes containing~$S_B$ and~$p$, such that  
$e_l = \mathbf{1}_{H(\Z_l)}$ for all $l \notin S(e)$. 

For a rigid analytic space $X$ defined over $E$, any point is defined over a finite extension of $E$. We write $X(\Qbar_p):= \bigcup_{E'/E \text{ finite}} X(E')$. 
\vspace{3mm}

\textbf{Acknowledgements}. The author would like to thank Ga{\"e}tan Chenevier, Eugen Hellmann, James Newton, Peter Scholze and Benjamin Schraen for helpful conversations and Kevin Buzzard for his comments on an earlier draft of this paper. The author would also like to thank the anonymous referee for various helpful comments and suggestions. The author was supported by the SFB/TR 45 of the DFG.

\section{Eigenvarieties and a $p$-adic Labesse--Langlands transfer}
Let $B, \til{G}$ and $G$ be as in Section 1 and let $e:=\otimes e_l \in C^\infty_c(G(\A_f^p), \Qbar)$ be an idempotent. 
Choose $S \supset S(e)$ and define
$$ \mathcal{H}_{ur,S}:= \bigotimes_{l\notin S} { '} \mathcal{H}_E(\SL_2(\Q_l),\SL_2(\Z_l)),$$
where $\mathcal{H}_E(\SL_2(\Q_l),\SL_2(\Z_l))$ is the algebra under convolution of compactly supported $E$-valued functions on $\SL_2(\Q_l)$ that are bi-invariant under $\SL_2(\Z_l)$. 
Denote by $I$ the Iwahori subgroup of $\SL_2(\Q_p)$ given by 
\[I:= \left\{ \left(\begin{array}{cc} a & b \\ c& d \end{array}\right) \in \SL_2(\Z_p) : c \equiv 0 \mod p \right\}.\]
Let 
\[
\mathcal{H}_S:=\mathcal{H}_{ur,S} \otimes_E \mathcal{A}_p
\]
be the Hecke algebra, where $\mathcal{A}_p$ is the commutative $E$-subalgebra of the Iwahori Hecke algebra $\mathcal{H}_E(\SL_2(\Q_p),I)$ generated by the characteristic function on the double coset
\[ \operatorname{I} \left(\begin{array}{cc} p^{-1} &  \\  & p \end{array}\right) \operatorname{I}. 
\] 
We let $\mathcal{W}:=\Hom_{cts}(\Z_p^*,\mathbb{G}_m)$ be the usual weight space. 
Using the spaces of overconvergent forms for $G$ as constructed in \cite{david} and Buzzard's machine, one can attach to this data an eigenvariety $\D(e,S)$ of idempotent type $e$ (cf.\ \cite{kevin}, \cite{david}). 

To be precise, one also has to make a choice of a compact operator in the construction, which we fix once and for all to be 
\[ u_0:= \mathbf{1}_{\cH_{ur,S}} \otimes \mathbf{1}_{\operatorname{I} \left(^{p^{-1}} \ _p \right) \operatorname{I}} \in \cH_S.
\]

The eigenvariety $\D(e,S)$ is a rigid analytic space defined over $E$ and it comes equipped with a locally finite (on the source) morphism 
\[
\omega: \D(e,S)\rightarrow \mathcal{W}
\] 
and an $E$-algebra homomorphism 
\[\psi:\mathcal{H}_S \rightarrow \mathcal{O}(\D(e,S)).\]
The points of $\D(e,S)$ correspond to finite slope systems of Hecke eigenvalues occurring in the space of overconvergent forms mentioned above. Moreover the map 
\begin{eqnarray*}
\D(e,S)(\Qbar_p) & \rightarrow & \operatorname{Hom}(\mathcal{H}_S,\Qbar_p) \times \mathcal{W}(\Qbar_p)\\
x & \mapsto & (\psi_x(h):= \psi(h)(x), \omega(x))
\end{eqnarray*}
is an injection (cf.\ Lemma 7.2.7 of \cite{BC}). 

Likewise, starting from an idempotent $\til{e} \in C^{\infty}_c(\til{G}(\A_f^p),\Qbar)$, a set $S \supset S(\til{e})$ and an associated Hecke algebra $\til{\mathcal{H}}_S=\til{\mathcal{H}}_{ur,S}\otimes_E \til{\mathcal{A}}_p$, we can build an eigenvariety $\D(\til{e},S)$ of idempotent type $\til{e}$ for $\til{G}$. The weight space in this case is $\til{\mathcal{W}}:= \Hom_{cts}((\Z_p^*)^2, \mathbb{G}_m)$. We have natural maps (see \cite[Sections 2.2 and 2.3]{p-adicLL}) 
\[
\mu:\til{\mathcal{W}} \longrightarrow \mathcal{W},
\]
\[
\lambda:\mathcal{H}_S \hookrightarrow \til{\mathcal{H}}_S.
\]
In the construction of $\D(\til{e},S)$ we always choose $\lambda(u_0)$ as the compact operator.

There are natural $\Q$-structures on the Hecke algebras defined by the subalgebras of $\Q$-valued functions which we denote by $\cH_{\Q,S}, \cH_{\Q,ur,S}$ etc. The monomorphism $\lambda$ can in fact be defined over $\Q$ (see Section 2 of \cite{p-adicLL} for details). In particular we constructed in Lemma 2.10 of \cite{p-adicLL} an inclusion of $\Q$-algebras
\[\lambda_{l,\Q}:\mathcal{H}_{\Q}(\SL_2(\Q_l),\SL_2(\Z_l)) \hookrightarrow \mathcal{H}_{\Q}(\GL_2(\Q_l),\GL_2(\Z_l)).\]

We also refer to \cite{p-adicLL} for details regarding the construction as well as general properties of the eigenvarieties. 

\begin{rem} There exists a Zariski-dense and accumulation subset 
\[Z\subset \D(e,S)(\Qbar_p)
\]
 coming from $p$-refined classical automorphic representations as defined in \cite{p-adicLL}, Def.\ 3.14. This is proved as usual (cf. \cite[Section 6.4.5]{chenevier} and also \cite[Prop. 3.9]{p-adicLL} for a proof of the analogous assertion for $\til{G}$ in the same notation) using the fact that forms of small slope are classical and that classical weights are dense in weight space.
\end{rem}

Define $t_l \in \mathcal{H}_{ur,S}$, as the characteristic function on the double coset
\[
\SL_2(\widehat{\Z}^S) \left(\begin{array}{cc} l & 0\\0 & l^{-1}\end{array}\right) \SL_2(\widehat{\Z}^S),
\]
where $\left(\begin{smallmatrix} l& \\ & l^{-1} \end{smallmatrix}\right)$ is understood to be the matrix in $\SL_2(\widehat{\Z}^S)=\prod_{q\notin S} \SL_2(\Z_q)$ which is equal to $1$ for all $q \neq l$ and equal to $\left(\begin{smallmatrix} l& \\ & l^{-1} \end{smallmatrix}\right)$ at $l$. 
Furthermore let  
\[h_l:= \frac{1}{l}\left( t_l + 1 \right) \in \cH_{ur,S}.
\]
Note that $h_l$ is an element of the subalgebra $\cH^0_{ur,S}\subset \cH_{ur,S}$ of $\mathcal{O}_E$-valued functions. 

\begin{lem}\label{galoisreps} Let $\D(e,S)$ be an eigenvariety of idempotent type for $G$. Then there exists a 3-dimensional pseudo-representation
\[ T: G_{\Q,S} \rightarrow \mathcal{O}(\D(e,S))\]
 such that $T(\Frob_l)=\psi(h_l)$ for all $l\notin S$. 
\end{lem}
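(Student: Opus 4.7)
The plan is to construct $T$ by interpolating compatible three-dimensional pointwise pseudo-representations across the Zariski-dense subset $Z$ of classical points in $\D(e,S)(\Qbar_p)$ from the preceding remark. This is the standard strategy for attaching Galois representations to eigenvarieties: handle the classical points first, then invoke a formal interpolation result.

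First, I would construct a pointwise pseudo-representation at each $z \in Z$. Such a $z$ comes from a $p$-refined classical automorphic representation $\pi$ of $G(\A)$, which by the $p$-adic Labesse--Langlands transfer of \cite{p-adicLL} lifts to a classical automorphic representation $\til\pi$ of $\til G(\A)$. Via Jacquet--Langlands, $\til\pi$ corresponds to a cuspidal automorphic representation of $\GL_2(\A)$, whose associated continuous two-dimensional $p$-adic Galois representation $\rho_{\til\pi}: G_{\Q,S} \to \GL_2(\Qbar_p)$ is provided by Eichler--Shimura--Deligne and is unramified outside $S$. I would then set
\[
T_z := \tr\bigl(\operatorname{Sym}^2 \rho_{\til\pi} \otimes (\det \rho_{\til\pi})^{-1}\bigr),
\]
the trace of the three-dimensional trace-zero adjoint of $\rho_{\til\pi}$.

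Next I would verify the Frobenius matching $T_z(\Frob_l) = \psi_z(h_l)$ at each $l \notin S$. Writing the Satake parameters of $\til\pi_l$ as $\alpha, \beta$, the eigenvalues of $(\operatorname{Sym}^2 \rho_{\til\pi} \otimes \det(\rho_{\til\pi})^{-1})(\Frob_l)$ are $\alpha/\beta$, $1$, $\beta/\alpha$. On the other hand, under restriction from $\GL_2(\Q_l)$ to $\SL_2(\Q_l)$, the unramified principal series $\pi(\alpha,\beta)$ becomes the unramified principal series of $\SL_2(\Q_l)$ with Satake parameter $\alpha/\beta$, and a direct Macdonald computation for the double coset defining $t_l$ gives that $t_l$ acts on the spherical vector by $l(\alpha/\beta + \beta/\alpha) + (l-1)$. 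Hence $\psi_z(h_l) = l^{-1}(\psi_z(t_l)+1) = 1 + \alpha/\beta + \beta/\alpha = T_z(\Frob_l)$. This local Satake computation, together with the compatibility of the inclusion $\lambda_{l,\Q}$ of \cite[Lemma 2.10]{p-adicLL} with restriction of principal series, is the main technical point of the proof.

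Finally I would pass from the pointwise family $\{T_z\}_{z \in Z}$ to a global pseudo-representation $T: G_{\Q,S} \to \cO(\D(e,S))$. Here I would invoke the standard interpolation argument for continuous pseudo-representations on rigid analytic spaces (compare \cite[Prop.~7.1.1]{BC} and the construction in \cite{chenevier}): uniform boundedness of the $T_z$ on each affinoid subdomain, combined with the Zariski density of $Z$, produces the desired $T$. The equality $T(\Frob_l) = \psi(h_l)$ then holds on all of $\D(e,S)$, since it holds on the Zariski-dense set $Z$ and both sides are global analytic functions. The hard part is the Hecke/Satake computation at the classical points; once it is in place the rest is a formal application of the eigenvariety machinery.
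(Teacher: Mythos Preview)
Your proposal is correct and follows essentially the same route as the paper: at each classical point take the trace of $\operatorname{Sym}^2\rho_{\til\pi}\otimes\det(\rho_{\til\pi})^{-1}$, match it to $\psi_z(h_l)$ by a Satake/Hecke computation, and interpolate via Proposition~7.1.1 of \cite{chenevier}. The only cosmetic differences are that the paper packages the three-dimensional representation via the adjoint embedding $\PGL_2\hookrightarrow\GL_3$ and writes the Hecke identity as $T_l^2/(lS_l)-1=\lambda_\Q(h_l)$; one small slip in your write-up is that the lift $\pi\mapsto\til\pi$ is the \emph{classical} Labesse--Langlands extension, not the $p$-adic transfer (which goes from $\til G$ to $G$).
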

\begin{proof} By the previous remark we have a Zariski dense subset $Z\subset \D(e,S)(\Qbar_p)$ of classical points. A point $z \in Z$ comes from an algebraic automorphic representation $\pi$ of $G(\A)$ and there is a projective Galois representation 
\[
\rho_z: G_{\Q,S} \rightarrow \PGL_2(\Qbar_p)
\] 
associated to $\pi$. Namely, if $\widetilde{\pi}$ is any algebraic automorphic representation of $\til{G}(\A)$ which is unramified outside $S$ and lifts $\pi$, let $\rho(\til{\pi}): G_{\Q,S}\rightarrow \GL_2(\Qbar_p)$ be the attached Galois representation by Deligne. It has the property that for any $l \notin S$, the characteristic polynomial of $\rho(\til{\pi})(\operatorname{Frob}_l)$ is given by $X^2 - T_l(\til{\pi}) X +lS_l(\til{\pi})$,  
where $T_l(\til{\pi}):= \iota_p \circ \iota^{-1}_{\infty}(\mu_l)$ and $\mu_l$ is the eigenvalue of 
\[
T_l := \mathbf{1}_{\GL_2(\widehat{\Z}^S) \left(\begin{smallmatrix} l& \\ & 1 \end{smallmatrix}\right)\GL_2(\widehat{\Z}^S)} \in \til{\cH}_{\Q,ur,S}
\] 
on $(\til{\pi}_f^S)^{\GL_2(\widehat{\Z}^S)}$ and similarly for $S_l = \mathbf{1}_{\GL_2(\widehat{\Z}^S) \left(\begin{smallmatrix} l& \\ & l \end{smallmatrix}\right)\GL_2(\widehat{\Z}^S)} \in \til{\cH}_{\Q,ur,S}$. 

Then 
\[
\rho_z = \eta \circ \rho (\til{\pi}): G_{\Q,S}\rightarrow \PGL_2(\Qbar_p)
\]
is the composition of $\rho(\til{\pi})$ and the natural homomorphism $\eta: \GL_2(\Qbar_p) \rightarrow \PGL_2(\Qbar_p)$. There is a monomorphism $\iota:\PGL_2(\Qbar_p) \hookrightarrow \GL_3(\Qbar_p)$ coming from the adjoint action, which identifies $\PGL_2(\Qbar_p)$ with $\operatorname{SO}_3(\Qbar_p)$. Define 
\[\sigma_z:=\iota \circ \rho_{z}: G_{\Q,S}\rightarrow \GL_3(\Qbar_p).\]
Then 
\[
\sigma_z \cong  \operatorname{Sym}^2(\rho(\til{\pi})) \otimes \operatorname{det}(\rho(\til{\pi}))^{-1} 
\]
and an easy calculation shows that for all $l \notin S$ 
\begin{eqnarray*} 
\operatorname{Tr}(\sigma_z(\Frob_l)) &=& \operatorname{Tr}^2(\rho(\til{\pi})(\Frob_l))/ \operatorname{det}(\rho(\til{\pi})(\Frob_l)) - 1 \\
&=& T^2_l(\til{\pi})/(l S_l(\til{\pi})) -1.
\end{eqnarray*}

Now an elementary calculation shows that $T^2_l/(lS_l) -1 = \lambda_\Q(h_l)$ and therefore
\[T^2_l(\til{\pi})/(l S_l(\til{\pi})) -1 = \psi(h_l)(z). 
\] 

The lemma now follows from Proposition 7.1.1 of \cite{chenevier} (see also Section 3.1.3 of \cite{p-adicLL}), i.e., Hypothesis $\mathbf{H}$ in \cite{chenevier} is satisfied using the Zariski-dense set $Z \subset \D(e,S)(\Qbar_p)$, the representations $\sigma_z$ for $z \in Z$ and the family of functions $\psi(h_l) \in \mathcal{O}(\D(e,S))$.   
\end{proof}

\begin{rem}
In the following we abbreviate $\D(e):=\D(e,S(e))$ and $\D(\til{e}):=\D(\til{e},S(\til{e}))$. 
\end{rem}
Below we will often use so-called \textit{special idempotents} attached to a finite set of Bernstein components. Let $F/ \Q_l$ be a finite extension, $H(F)$ the $F$-points of a reductive group over $F$. Given a Bernstein component $\mathfrak{s}$ of the category of smooth $\Qbar$-representations of $H(F)$, there is an idempotent ~$e_{\mathfrak{s}} \in C^{\infty}_c(H(F), \Qbar)$ such that for an irreducible smooth $\Qbar$-representation $\sigma$ of $H(F), $  ~$e_{\mathfrak{s}}\cdot\sigma\neq  0$ if and only if $\sigma$ is contained in the Bernstein component $\mathfrak{s}$. Similarly one can attach an idempotent to a finite set $\Sigma$ of Bernstein components.
We refer to Section~3 of \cite{bushnell} for a nice overview and to Proposition 3.13 of loc.cit.\ for the existence of these so-called special idempotents. 

When we choose special idempotents below, we may always assume they take values in $\Qbar$ as all automorphic representations we deal with in this paper are algebraic and have the property that their finite part is defined over~$\Qbar$.

We also want to remark here that irreducible supercuspidal representations that are in the same Bernstein component differ from each other by a twist by an unramified character. In particular, if two irreducible supercuspidal representations $\sigma$ and $\sigma'$ of $\SL_2(\Q_l)$ are in the same Bernstein component, then $\sigma\cong\sigma'$. 

We use the following notation: Let $\mathfrak{s}$ be the Bernstein component of $\til{G}(\Q_l)$ defined by a supercuspidal representation $\til{\pi}_l$ of $\til{G}(\Q_l)$. Then we denote by $\operatorname{Res}^{\til{G}}_G(\mathfrak{s})$ the finite set of Bernstein components defined by the representations occurring in $\til{\pi}_l|_{G(\Q_l)}$.
Note this is well defined. 

The classical transfer, by which we just mean the map that attaches to an automorphic representation $\til{\pi}$ of $\til{G}(\A)$ an $L$-packet of representations $\Pi(\til{\pi})$ of $G(\A)$, can be interpolated to maps between suitable eigenvarieties. 

\begin{defn} Two idempotents $\widetilde{e} \in C^\infty_c(\widetilde{G}(\A^p_f), \Qbar)$ and $e \in C^\infty_c(G(\A^p_f),\Qbar)$ are called \emph{Langlands compatible} if they satisfy: For any discrete automorphic representation $\til{\pi}$ of $\widetilde{G}(\A)$ with $\widetilde{e} \cdot \widetilde{\pi}^p_f \neq 0$ and any $\tau \in \Pi(\widetilde{\pi}_p)$, there exists an element $\pi$ in the packet $\Pi(\widetilde{\pi})$, such that 
\begin{itemize}
	\item $m(\pi) > 0$,
  \item  $e \cdot \pi^p_f \neq 0$ and
	\item  $\pi_p = \tau$.
\end{itemize}
\end{defn}

\begin{thm} \begin{enumerate}
	\item  Let $\til{e}\in C^{\infty}_c(\til{G}(\A_f^p),\Qbar)$ be an idempotent. Then there exists an idempotent $e \in C^\infty_c(G(\A^p_f),\Qbar)$ with $S(e)=S(\til{e})$ and such that
$\til{e}$ and $e$ are Langlands compatible. Define S:=S(e).
\item Assume $\til{e}= \otimes \til{e}_l\in C^{\infty}_c(\til{G}(\A_f^p),\Qbar)$ has the property that for all $l \in S(\til{e})$, $\til{e}_l$ is a special idempotent attached to a supercuspidal Bernstein component $\mathfrak{s_l}$. Define $e:=\otimes e_l\in C^\infty_c(G(\A^p_f),\Qbar)$ where $e_l:= e_{\SL_2(\Z_l)}$ for all $l \notin S(\til{e})$ and for $l \in S(\til{e})$, $e_l$ is a special idempotent attached to the finite set of Bernstein components $\operatorname{Res}^{\til{G}}_G(\mathfrak{s}_l)$. Then $\til{e}$ and $e$ are Langlands compatible. 
\item For any two Langlands compatible idempotents $\til{e}\in C^{\infty}_c(\til{G}(\A_f^p),\Qbar)$ and $e\in C^\infty_c(G(\A^p_f),\Qbar)$ with the same set $S$ of bad places there exists a morphism $\zeta:\D(\til{e}) \rightarrow \D(e)$ such that the diagrams 	
\[
\xymatrix{
\D(\til{e})  \ar[d]^{\widetilde{\omega}} \ar[r]^\zeta &\D(e) \ar[d]^{\omega} \\
\widetilde{\mathcal{W}} \ar[r]^\mu &\mathcal{W} }
	\hspace{1cm}
	\xymatrix{
\mathcal{H}_S \ \ar[d] \ar@{^{(}->}[r]^\lambda & \widetilde{\mathcal{H}}_S \ar[d]\\
  \mathcal{O}(\D(e)) \ar[r]^{\zeta^*}  &\mathcal{O}(\D(\til{e})) }  
\]
commute.	
\end{enumerate} 
\label{p-adictransfer}
\end{thm}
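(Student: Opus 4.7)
The plan is to establish the Langlands compatibility of (1)--(2) by a local analysis at primes in $S(\til{e})$, and to build the morphism $\zeta$ in (3) by interpolating the classical Labesse--Langlands transfer over the Zariski-dense set of classical points on $\D(\til{e})$. Part (2) is the key local construction; part (1) will reduce to it by a Bernstein-support argument.

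For (2), fix a discrete automorphic representation $\til{\pi}$ with $\til{e}\cdot\til{\pi}_f^p\neq 0$. Then $\til{\pi}_l \in \mathfrak{s}_l$ for each $l \in S(\til{e})$, and $\til{\pi}_l$ is unramified at the remaining primes $l\neq p$. For any $\pi \in \Pi(\til{\pi})$, the component $\pi_l$ at such an $l$ is a constituent of $\til{\pi}_l|_{G(\Q_l)}$, hence lies in some Bernstein component of $\Res^{\til{G}}_G(\mathfrak{s}_l)$, so by the defining property of special idempotents $e_l\cdot\pi_l\neq 0$. At places outside $S(\til{e})$, the spherical idempotent $e_{\SL_2(\Z_l)}$ detects the unique unramified member of $\Pi(\til{\pi}_l)$. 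The existence of a global $\pi\in\Pi(\til{\pi})$ with $m(\pi)>0$ and prescribed $\pi_p=\tau$ for any $\tau\in\Pi(\til{\pi}_p)$ is then the content of the Labesse--Langlands multiplicity formula: at each finite place away from $p$ one has complete local freedom in the choice of packet element, so any sign-character obstruction can be met by adjusting at those places. Part (1) follows by a Bernstein-support argument: an arbitrary $\til{e}_l$ annihilates all but finitely many Bernstein components $\mathfrak{s}^{(1)}_l,\dots,\mathfrak{s}^{(n_l)}_l$; applying the method of (2) to their union (the principal-series case being strictly easier, since such representations of $\til{G}(\Q_l)$ typically restrict irreducibly to $G(\Q_l)$) produces an $e_l$ compatible with $\sum_i e_{\mathfrak{s}^{(i)}_l}$, and hence \emph{a fortiori} with $\til{e}_l$.

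For (3), interpolate the classical transfer. Each classical point $\til{z}\in\D(\til{e})(\Qbar_p)$ corresponds to a pair $(\til{\pi},\til{R})$ consisting of an automorphic representation and a $p$-refinement; the refinement singles out some $\tau\in\Pi(\til{\pi}_p)$, and Langlands compatibility supplies $\pi\in\Pi(\til{\pi})$ with $m(\pi)>0$, $e\cdot\pi_f^p\neq 0$, and $\pi_p=\tau$. This $\pi$ together with its $p$-refinement defines a classical point on $\D(e)$. Although the choice of $\pi$ need not be canonical, the resulting system of Hecke eigenvalues on $\mathcal{H}_S$ is, because the identity $T_l^2/(lS_l)-1=\lambda_\Q(h_l)$ derived in the proof of Lemma~\ref{galoisreps} forces $\psi_\pi(h_l)=\psi_{\til{\pi}}(\lambda(h_l))$ at every $l\notin S$. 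Together with the weight compatibility $\omega(z)=\mu(\til{\omega}(\til{z}))$ and the matching compact operators ($u_0$ on $\D(e)$ and $\lambda(u_0)$ on $\D(\til{e})$ by construction), this yields a canonical assignment on classical points which, by Buzzard's eigenvariety machine (equivalently, by Chenevier's interpolation formalism as adapted in \cite{p-adicLL}), extends to a rigid-analytic morphism $\zeta:\D(\til{e})\rightarrow\D(e)$. Both diagrams commute on classical points and hence everywhere by Zariski-density and reducedness.

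The main obstacle is the rigidification step in (3): the individual choice of $\pi\in\Pi(\til{\pi})$ at each classical point is non-canonical, and what rescues the construction is precisely the explicit relation $\lambda_\Q(h_l)=T_l^2/(lS_l)-1$, which determines the image Hecke eigensystem from that of $\til{\pi}$ and thereby pins down the map on the level of eigenvarieties.
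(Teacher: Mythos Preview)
Your overall strategy matches the paper's: parts (1)--(3) all ultimately reduce to results in \cite{p-adicLL}, with (2) requiring the most direct argument here. Two points deserve sharpening, however.

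For (2), your sentence ``at each finite place away from $p$ one has complete local freedom in the choice of packet element'' contradicts what you correctly observed two sentences earlier: at $l\notin S(\til{e})$ the idempotent $e_{\SL_2(\Z_l)}$ forces $\pi_l=\pi_l^0$, so there is \emph{no} freedom there. The freedom is only at $l\in S(\til{e})\setminus\{p\}$, and one must check this suffices. The paper is explicit: since $S_B\subset S(\til{e})\setminus\{p\}$, one may adjust at a prime of $S_B$, and Proposition~4.11 of \cite{p-adicLL} guarantees that for endoscopic packets on this inner form, changing the local component at such a prime produces an automorphic $\pi'$ still in $Y(\til{\pi},\tau)$. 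This is not automatic from the bare multiplicity formula---for non-quasi-split inner forms, changing the local component at a place where the packet has size two does \emph{not} always change the global multiplicity (cf.\ the remark after Lemma~4.1)---so your appeal to ``any sign-character obstruction can be met'' skips the one nontrivial input.

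For (1), the paper does not reduce to (2). Instead it observes that any $\til{e}$ satisfies $e_{\til{K}}\ast\til{e}=\til{e}=\til{e}\ast e_{\til{K}}$ for some compact open $\til{K}\subset\til{G}(\A_f^p)$, and then invokes Propositions~4.15--4.16 of \cite{p-adicLL}, which treat the compact-open case directly. Your Bernstein-support reduction is a reasonable alternative route, but ``the principal-series case being strictly easier, since such representations typically restrict irreducibly'' is too quick: principal series of $\GL_2(\Q_l)$ restrict reducibly whenever the two inducing characters differ by a quadratic twist, and Steinberg-type representations live in principal-series Bernstein components of $\til{G}$, so defining $e_l$ and running the multiplicity argument there needs care you have not supplied. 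The paper's route via compact opens sidesteps this.

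For (3) your sketch is accurate and is exactly what lies behind Theorem~5.7 of \cite{p-adicLL}, which the paper simply cites; the auxiliary eigenvarieties $\D'(\til{e})$ and $\D''(e)$ of Remark~\ref{diagram} implement precisely the interpolation you describe.
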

\begin{proof} Part (1) follows from Proposition 4.15 and Proposition 4.16 of \cite{p-adicLL}, once we remark that for any idempotent $\til{e} \in C^{\infty}_c(\til{G}(\A_f^p),\Qbar)$, there exists a compact open subgroup $\til{K}\subset \til{G}(\A_f^p)$ such that $ e_{\til{K}} \cdot \til{e}= \til{e} =\til{e} \cdot e_{\til{K}}$. 
Part (3) follows from \cite{p-adicLL} Theorem 5.7 and the proof of it.

Part (2) can be proved in the same way as Proposition 4.16 of \cite{p-adicLL}. Namely for an automorphic representation $\til{\pi}$ of $\til{G}(\A)$ with $\widetilde{e} \cdot \widetilde{\pi}^p_f \neq 0$ and any $\tau \in \Pi(\til{\pi}_p)$ define 
\begin{eqnarray*} Y(\til{\pi},\tau)&:=&\{\pi \in \Pi(\til{\pi}) | e\cdot(\pi^p_f) \neq 0, \pi_p=\tau\} \\
&=& \{\pi \in \Pi(\til{\pi}) | \pi_l = \pi_l^0 \ \forall l \notin S(\til{e}), \pi_p=\tau\},
\end{eqnarray*}
where in the last line $\pi_l^0$ denotes the unique member of the $L$-packet $\Pi(\til{\pi}_l)$ with $(\pi_l^0)^{\SL_2(\Z_l)}\neq 0$.
We need to show that there exists $\pi \in Y(\til{\pi},\tau)$ such that $m(\pi)>0$. For that let $\pi \in Y(\til{\pi},\tau)$ be arbitrary and assume $m(\pi)=0$. Then by Proposition 4.11 of \cite{p-adicLL} we may change $\pi$ at a prime $l \in S_B$ to a different representation in the local $L$-packet $\Pi(\til{\pi}_l)$ to get a representation $\pi'$ which is automorphic and still in $Y(\til{\pi},\tau)$.
\end{proof}
\begin{rem}\label{diagram} We recall that $\zeta$ is constructed using two auxiliary eigenvarieties $\D'(\til{e})$ and $\D''(e)$, which are described in Section 3.3 and 3.4 of \cite{p-adicLL}. $\D'(\til{e})$ is the eigenvariety which apart from the Hecke algebra is build from the same data as $\D(\widetilde{e})$ but where the Hecke algebra is replaced by $\mathcal{H}_S$. It comes equipped with a morphism $\omega':\D'(\til{e}) \rightarrow \til{\mathcal{W}}$ and the points of $\D'(\til{e})$ embed
\[ \D'(\til{e})(\Qbar_p) \hookrightarrow \Hom(\cH_S, \Qbar_p) \times \til{\mathcal{W}}(\Qbar_p).
\]
We have a morphism $\lambda':\D(\til{e}) \rightarrow \D'(\til{e})$, which on points is given by 
\[
(\psi_x,\til{\omega}(x)) \mapsto (\psi_x|_{\cH_S}, \til{\omega}(x)).
\]  

The second eigenvariety $\D''(e)$ is simply defined as the pullback $\til{\mathcal{W}} \times_{\mathcal{W}} \D(e)$ and the morphism $\zeta$ is the composite 
\[
\xymatrix{
\D(\til{e}) \ar[d]^{\til{\omega}} \ar[r]^{\lambda'} & \D'(\til{e}) \ar[d]^{\omega'} \ar[r]^{\xi} & \ar[d] \D''(e) \ar[r] &\D(e) \ar[d]^{\omega} \\
\widetilde{\mathcal{W}} \ar[r]^{\operatorname{id}} & \widetilde{\mathcal{W}} \ar[r]^{\operatorname{id}}  &\widetilde{\mathcal{W}}  \ar[r]^\mu &\mathcal{W} }.
\]
By Proposition 5.6 of \cite{p-adicLL}, the morphism $\xi$ is a closed immersion. This is important in what follows.  
\end{rem}

\begin{rem} \label{dividemp} If $e, e' \in C^{\infty}_c(G(\A_f^p),\Qbar)$ are two idempotents as above, such that $S(e)=S(e')$ and such that $e'_l|e_l$ for all $l \in S \backslash \{p\}$, then there exists a closed immersion $\D(e) \hookrightarrow \D(e')$. (cf.\ \cite{BC} Section 7.3).
\end{rem}

\section{Slopes of CM points}
We determine the slopes of points on eigenvarieties $\D(\til{e})$ that arise from automorphic representations $\pi(\til{\theta})$ of $\til{G}(\A)$ coming from a Gr{\"o}{\ss}encharacter. 

We view $\Z^2$ as a subset of weight space via
\[
\Z^2 \hookrightarrow \til{\mathcal{W}}(\Qbar_p), (k_1,k_2) \mapsto \left((z_1,z_2) \mapsto z_1^{k_1}z_2^{k_2} \right).
\]
Let $\underline{k}=(k_1,k_2) \in \Z^2$, $k_1 \geq k_2$. We denote by $\widetilde{I}$ the Iwahori subgroup of $\GL_2(\Q_p)$ given by
\[\widetilde{I}:= \left\{ \left(\begin{array}{cc} a & b \\ c& d \end{array}\right) \in \GL_2(\Z_p) : c \equiv 0 \mod p \right\}.\]

Recall (cf.\ Section 7.2.2 of \cite{BC} and Definition 3.14 of ~\cite{p-adicLL}) that a $p$-refined automorphic representation of weight $\underline{k}$ of $\widetilde{G}(\A)$ is a pair $(\til{\pi}, \chi)$ such that
\begin{itemize}
	\item $\til{\pi}$ is an automorphic representation of $\widetilde{G}(\A)$; 
	\item $\til{\pi}_p$ has a non-zero fixed vector under the Iwahori $\widetilde{I}$ and $\chi=(\chi_1,\chi_2)$ is an ordered pair of characters $\chi_i:\Qbar^*_p \rightarrow \C^*, i=1,2$ such that $\til{\pi}_p \hookrightarrow \operatorname{Ind}_B^{\GL_2(\Q_p)}(\chi_1,\chi_2)$, where $\operatorname{Ind}_B^{\GL_2(\Q_p)}(-)$ denotes the normalized parabolic induction from the upper triangular Borel $B \subset \GL_2(\Q_p)$; 
	\item $\til{\pi}_\infty \cong  (\operatorname{Sym}^{k_1-k_2} (\C^2) \otimes \operatorname{Nrd}^{k_2})^*$. 
\end{itemize}
Different points on an eigenvariety that come from the same automorphic representation $\til{\pi}$ are parametrized by the different choices of pairs $\chi=(\chi_1,\chi_2)$, such that $\til{\pi_p} \hookrightarrow \operatorname{Ind}_B^{\GL_2(\Q_p)}(\chi_1,\chi_2)$, which are called refinements. Note if $\pi_p \cong \operatorname{Ind}_B^{\GL_2(\Q_p)}(\chi_1,\chi_2)$, there are precisely two refinements, namely $(\chi_1,\chi_2)$ and $(\chi_2,\chi_1)$. 

Fix an idempotent $\til{e}$ and let $S:=S(\til{e})$.
Define 
\[ U_p := \mathbf{1}_{\til{\cH}_{ur,S}} \otimes \mathbf{1}_{\til{\operatorname{I}}\left(^1 \ _p\right) \til{\operatorname{I}}} \in \til{\cH}_S. 
\]
The operator 
\[  
\mathbf{1}_{\til{\cH}_{ur,S}} \otimes \mathbf{1}_{\til{\operatorname{I}}\left(^{p^{-1}} \ _p\right) \til{\operatorname{I}}} \in \til{\cH}_S.
\]
is the image of $u_0$ under $\lambda$, and we denote it by $u_0$ again. 

Recall from Section 2, that $\D(\til{e})$ comes equipped with a morphism $\psi:\til{\cH}_{S} \rightarrow \mathcal{O}(\D(\til{e}))$. 
For a point $z$ on the eigenvariety $\D(\til{e})$ which corresponds to a $p$-refined automorphic representation $(\til{\pi},(\chi_1,\chi_2))$ of weight $\underline{k}$, we have 
\[\psi(U_p)(z) = \psi_{(\til{\pi},(\chi_1,\chi_2))}(U_p) = \iota_p(\chi_2(p)p^{1/2})p^{-k_2}
\]
and 
\begin{equation}\label{u0}
\psi(u_0)(z) = \iota_p(\chi_2(p)\chi_1(p)^{-1}) p^{k_1-k_2+1}.
\end{equation}

To justify the next definition recall the following classicality theorem. Again fix~$\underline{k}$ as above and choose an affinoid neighbourhood $X$ of $\underline{k}$ in $\til{\mathcal{W}}$. We denote by $M(\til{e},\underline{k},k(X))$ the $E$-Banach space of overconvergent forms of weight $\underline{k}$ and `tame level' $\til{e}$ as defined in Section 3 of \cite{david}. It comes equipped with an action of $\til{\cH}_S$. We sometimes omit the parameter $k(X)$ and write $M(\til{e}, \underline{k})$ instead. If $x \in \D(\til{e})(\Qbar_p)$ with $\til{\omega}(x)=\underline{k}$, then there exists an overconvergent finite slope eigenform $f \in M(\til{e},\underline{k},k(X))$ with eigenvalues $\psi_x$. The space $M(\til{e},\underline{k},k(X))$ has a finite dimensional subspace $M(\widetilde{e},\underline{k})^{cl}$ of classical forms. 

\begin{thm}[{\cite[Theorem 3.9.6]{david}}] 
Let $\underline{k}=(k_1,k_2) \in \Z^2$, $k_1 \geq k_2$. Let $E'/E$ be a finite extension, $\lambda \in E'^*$ and $\sigma:= v_p(\lambda)$. If 
$$ \sigma < k_1 - k_2 +1,$$
then the generalized $\lambda$-eigenspace of $U_p$ acting on $M(\widetilde{e},\underline{k},k(X))\widehat{\otimes}_E E'$ is contained in the subspace $M(\widetilde{e},\underline{k})^{cl}\widehat{\otimes}_E E'$.  
\end{thm}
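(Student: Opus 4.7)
My strategy is the Coleman classicality argument adapted to the quaternionic setting: construct a short exact sequence realizing $M(\widetilde{e},\underline{k})^{cl}$ as the kernel of an explicit differential-type operator on the coefficient module, and then exploit a commutation relation with $U_p$ to force any non-classical contribution to have slope at least $k_1 - k_2 + 1$. To set up, I would unwind the definition of $M(\widetilde{e},\underline{k},k(X))$ from Section~3 of \cite{david}. Since $\widetilde{G}$ comes from a \emph{definite} quaternion algebra, the relevant adelic double coset space is finite, so after choosing a compact open subgroup $\widetilde{K} \subset \widetilde{G}(\A_f^p)$ with $\widetilde{e}\cdot e_{\widetilde{K}} = \widetilde{e}$, overconvergent weight-$\underline{k}$ forms amount to finitely many vectors in a locally $\Q_p$-analytic induction $A_{\underline{k}}$, the space of locally analytic $E$-valued functions on $\Z_p$ equipped with the twisted $\widetilde{I}$-action of weight $\underline{k}$. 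The classical subspace $M(\widetilde{e},\underline{k})^{cl}$ corresponds to the finite-dimensional sub-$\widetilde{I}$-representation $V_{\underline{k}} \subset A_{\underline{k}}$ of polynomials of degree $\leq k_1 - k_2$, i.e.\ $\operatorname{Sym}^{k_1-k_2}(E^2)\otimes\det^{k_2}$.

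Next I would establish the short exact sequence of topological $\widetilde{I}$-modules
$$0 \to V_{\underline{k}} \to A_{\underline{k}} \xrightarrow{\Theta} A_{\underline{k}'} \to 0,$$
with $\Theta = d^{k_1-k_2+1}/dz^{k_1-k_2+1}$ and $\underline{k}'$ the shifted weight making $\Theta$ intertwine the two $\widetilde{I}$-actions. Exactness is a direct analytic check: the kernel is precisely the polynomials of degree $\leq k_1-k_2$, and $\Theta$ admits a power-series primitive on its image. Since the coset space underlying our forms is finite, passing to automorphic forms is exact, producing
$$0 \to M(\widetilde{e},\underline{k})^{cl}\widehat{\otimes}_E E' \to M(\widetilde{e},\underline{k})\widehat{\otimes}_E E' \to M(\widetilde{e},\underline{k}')\widehat{\otimes}_E E' \to 0,$$
compatible with the $\widetilde{\cH}_S$-action.

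The crux is the intertwining identity $\Theta \circ U_p = p^{k_1-k_2+1}\cdot U_p' \circ \Theta$, where $U_p'$ is the Atkin--Lehner operator for the target weight $\underline{k}'$. This follows from a direct computation using the coset decomposition $\widetilde{I}\left(\begin{smallmatrix} 1 & \\ & p\end{smallmatrix}\right)\widetilde{I} = \bigsqcup_{i=0}^{p-1}\left(\begin{smallmatrix} 1 & i \\ & p\end{smallmatrix}\right)\widetilde{I}$ and the chain rule applied $k_1-k_2+1$ times; the factor $p^{k_1-k_2+1}$ emerges from the interplay between the weight-$\underline{k}$ normalization of the Iwahori action on $A_{\underline{k}}$ and the differentiation of the translation appearing in each coset. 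Granted the identity, $U_p$ on $M(\widetilde{e},\underline{k}')\widehat{\otimes}_E E'$ equals $p^{k_1-k_2+1}$ times an operator preserving an explicit integral lattice, so every generalized $U_p$-eigenvalue on the cokernel has $p$-adic valuation at least $k_1-k_2+1$. Hence, if $v_p(\lambda) < k_1-k_2+1$, the generalized $\lambda$-eigenspace maps to zero in the cokernel and therefore lies in $M(\widetilde{e},\underline{k})^{cl}\widehat{\otimes}_E E'$, as desired.

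The main obstacle is the last step: verifying the intertwining identity \emph{on the nose}, not merely up to a $p$-adic unit, and exhibiting an explicit $\mathcal{O}_E$-lattice in $A_{\underline{k}'}$ preserved by $p^{-(k_1-k_2+1)}\Theta\circ U_p$, so that the formal divisibility by $p^{k_1-k_2+1}$ translates into a genuine slope bound on the Banach space of quotient forms. This is essentially Coleman's original bookkeeping transferred to the quaternionic Iwahori setting; once it is in place, the classicality conclusion is immediate.
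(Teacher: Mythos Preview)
The paper does not supply a proof of this statement; it is quoted as \cite[Theorem~3.9.6]{david} and used as a black box, so there is nothing in the paper's own text to compare your argument against. Your sketch is the standard Coleman-type classicality argument (short exact sequence via the $\Theta$-operator, the commutation relation $\Theta\circ U_p = p^{k_1-k_2+1}\,U_p'\circ\Theta$, and the resulting slope bound on the quotient), which is correct in outline and is indeed essentially the approach taken in the cited reference.
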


\begin{defn}
\begin{enumerate}
	\item A point $x = (\psi_x,\til{\omega}(x))$ on $\D(\til{e})$, with $\til{\omega}(x) = (k_1,k_2)$ is called \emph{of critical slope} if $v_p(\psi_x(U_p))= k_1 -k_2+1$.
  \item A refinement $\chi$ of an automorphic representation $\til{\pi}$ of $\til{G}(\A)$ of weight $(k_1,k_2)$ is called \emph{of critical slope} if $v_p(\psi_{(\til{\pi},\chi)}(U_p)) = k_1-k_2+1$. 
\end{enumerate}
\end{defn}

Now let $L/\Q$ be an imaginary quadratic extension and let $\til{\theta}:\A^*_L/L^* \rightarrow \C^*$ be a Gr{\"o}{\ss}encharacter which does not factor through the norm. 
In \cite{JL}, Jacquet and Langlands show how to associate to $\widetilde{\theta}$ a cuspidal automorphic representation $\tau(\widetilde{\theta})$ of $\GL_2(\A)$. 
We refer to \S 12 of \cite{JL} for details regarding the construction and characterization. Assume $\tau(\widetilde{\theta})$ is in the image of the global Jacquet--Langlands transfer $\operatorname{JL}$ from $\widetilde{G}$ to $\GL_2$, i.e., $\tau(\til{\theta})_v$ is a discrete series representation for all $v \in S_B$. 
Then $\pi(\til{\theta}):= \operatorname{JL}^{-1}(\tau(\til{\theta}))$ is an automorphic representation of $\til{G}(\A)$. Assume $\pi(\til{\theta})$ is of weight $(k_1,k_2) \in \Z^2$, so
\[\pi(\til{\theta})_{\infty} \cong (\operatorname{Sym}^{k_1-k_2}(\C^2) \otimes \operatorname{Nrd}^{k_2})^* \cong \operatorname{Sym}^{k_1-k_2}(\C^2) \otimes \operatorname{Nrd}^{-k_1}\]
and $\til{\theta}_{\infty}:L_{\infty}^* \rightarrow \C^*$ is given by 
$\til{\theta}_{\infty}(z) = (z\overline{z})^{-k_1-1/2}z^{k_1-k_2+1}$ (see Remark 7.7 of \cite{Gelbart}).
Let $\widetilde{e}\in C^{\infty}_c(\til{G}(\A_f^p),\Qbar)$ be an idempotent such that \ $\til{e} \cdot \pi(\til{\theta})_f^p \neq 0$. 

\begin{lem}\label{cs} Let $\pi(\til{\theta})$ be an automorphic representation of $\til{G}(\A)$ associated to a Gr{\"o}{\ss}encharacter of $L$ as above. Assume $p$ splits in $L$ and $\pi(\til{\theta})_p$ is unramified. Then ~$\pi(\til{\theta})$ has a refinement of critical slope. More precisely let $\til{x},\til{y} \in \D(\til{e})$ be the two points attached to $\pi(\til{\theta})$. Then the slopes of $\psi_{\til{x}}(U_p)$ and $\psi_{\til{y}}(U_p)$ are $k_1-k_2+1$ and~$0$. Furthermore 
\[
v_p(\psi_{\til{x}}(u_0)) = 2(k_1-k_2+1).
\]
\end{lem}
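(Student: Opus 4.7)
The plan is to identify $\pi(\til{\theta})_p$ as a normalized principal series, compute the $p$-adic valuations of its two Satake parameters, and then substitute into the formula for $\psi(U_p)$ and equation~\eqref{u0}. Since $p \notin S_B$ the local Jacquet--Langlands transfer at $p$ is the identity, so $\pi(\til{\theta})_p \cong \tau(\til{\theta})_p$. Writing $L \otimes_{\Q} \Q_p = L_{\mathfrak{p}} \times L_{\bar{\mathfrak{p}}} \cong \Q_p \times \Q_p$, the construction of $\tau(\til{\theta})$ in~\cite{JL} gives $\pi(\til{\theta})_p \cong \operatorname{Ind}_B^{\GL_2(\Q_p)}(\til{\theta}_{\mathfrak{p}}, \til{\theta}_{\bar{\mathfrak{p}}})$ as a normalized principal series, where $\til{\theta}_{\mathfrak{p}}, \til{\theta}_{\bar{\mathfrak{p}}} : \Q_p^* \to \C^*$ are the local components of $\til{\theta}$. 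The unramifiedness hypothesis makes both characters unramified and so produces exactly two refinements, the two orderings of this pair.

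Next I would compute $v_p(\til{\theta}_{\mathfrak{p}}(p))$ and $v_p(\til{\theta}_{\bar{\mathfrak{p}}}(p))$ via the idele product formula. Twist to $\til{\theta}' := \til{\theta} \cdot |\cdot|^{-1/2}$, whose infinity type is algebraic: $\til{\theta}'_\infty(z) = z^{-k_2} \bar{z}^{-k_1-1}$. Pick a principal generator $\alpha \in L^*$ of $\mathfrak{p}^h$, chosen in its ray class so that $\alpha$ is coprime to the conductor of $\til{\theta}'$ outside $p$; this is achievable by a suitable ray-class adjustment. Since $\alpha$ is a $\mathfrak{p}$-uniformizer and a $\bar{\mathfrak{p}}$-unit, the product formula $\prod_v \til{\theta}'_v(\alpha) = 1$ reduces to $\til{\theta}'_\infty(\alpha) \cdot \til{\theta}'_{\mathfrak{p}}(p)^h = 1$, i.e., $\til{\theta}'_{\mathfrak{p}}(p)^h = \alpha^{k_2} \bar{\alpha}^{k_1+1}$. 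Applying $\iota_p$ (under which $v_p(\alpha) = h$ and $v_p(\bar{\alpha}) = 0$) gives $v_p(\til{\theta}'_{\mathfrak{p}}(p)) = k_2$, and untwisting yields $v_p(\til{\theta}_{\mathfrak{p}}(p)) = k_2 - \tfrac12$. Swapping the roles of $\mathfrak{p}$ and $\bar{\mathfrak{p}}$ gives $v_p(\til{\theta}_{\bar{\mathfrak{p}}}(p)) = k_1 + \tfrac12$.

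Finally I substitute into $\psi(U_p)(z) = \iota_p(\chi_2(p) p^{1/2}) p^{-k_2}$. The refinement $(\chi_1,\chi_2) = (\til{\theta}_{\bar{\mathfrak{p}}}, \til{\theta}_{\mathfrak{p}})$ gives slope $(k_2 - \tfrac12) + \tfrac12 - k_2 = 0$, and the opposite refinement $(\til{\theta}_{\mathfrak{p}}, \til{\theta}_{\bar{\mathfrak{p}}})$ gives slope $(k_1 + \tfrac12) + \tfrac12 - k_2 = k_1 - k_2 + 1$; the latter is the critical-slope point $\til{x}$, the former is $\til{y}$. Equation~\eqref{u0} then evaluates to $v_p(\psi_{\til{x}}(u_0)) = v_p(\til{\theta}_{\bar{\mathfrak{p}}}(p)) - v_p(\til{\theta}_{\mathfrak{p}}(p)) + (k_1 - k_2 + 1) = 2(k_1-k_2+1)$. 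The main subtlety lies in the bookkeeping of the half-integer normalizations and in verifying that the auxiliary finite-place contributions in the product formula for $\alpha$ can be arranged to be trivial, which is the point of the ray-class choice on $\alpha$.
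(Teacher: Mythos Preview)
Your argument is correct and follows the same overall strategy as the paper: identify $\pi(\til{\theta})_p$ as the unramified principal series $\operatorname{Ind}_B^{\GL_2(\Q_p)}(\til\theta_{\mathfrak p},\til\theta_{\bar{\mathfrak p}})$, compute the $p$-adic valuations of $\til\theta_{\mathfrak p}(p)$ and $\til\theta_{\bar{\mathfrak p}}(p)$, and substitute into the formulas for $\psi(U_p)$ and $\psi(u_0)$.

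The only genuine difference is in how the valuations are extracted. You evaluate the algebraic twist $\til\theta'=\til\theta\cdot|\cdot|^{-1/2}$ on a carefully chosen principal element $\alpha$ generating $\mathfrak p^h$ and read off the valuation from the product formula. The paper instead passes to the $p$-adic avatar of $\til\theta'$ (shifting the archimedean weight to $p$) and observes that this character factors through the \emph{compact} group $\A_L^*/L^*L_\infty^*$, hence lands in $\mathcal O_F^*$; evaluating at the ideles $p_w,p_{\bar w}$ then immediately gives the two $U_p$-eigenvalues as $p$-adic units times the explicit weight shift, and the slopes drop out. The paper's route is slightly slicker in that it bypasses the ray-class bookkeeping entirely. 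In your approach, incidentally, that bookkeeping is not strictly necessary either: even without arranging $\alpha\equiv 1$ modulo the conductor, the contributions $\til\theta'_v(\alpha)$ at the ramified finite places $v\nmid p$ are roots of unity (since $\alpha\in\mathcal O_v^*$ and continuous characters of profinite groups into $\C^*$ have finite image), so they do not affect the valuation computation.
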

\begin{proof} 
The Gr{\"o}{\ss}encharacter $\til{\theta}_0:= \til{\theta} ||\operatorname{N}_{L/\Q}(\cdot)||^{-1/2}$ is algebraic and we can turn it into a $p$-adic character by shifting the weight from $\infty$ to $p$, i.e., we define 
\[
\til{\theta}': \A_L^*/L^* \rightarrow \Qbar_p^* 
\]
\[
\til{\theta}'(x) = \iota_p (\til{\theta}_0(x) \til{\theta}_{0,\infty}^{-1}(x_{\infty})) \tau_{w}(x_w)^{-k_2}\tau_{\overline{w}}(x_{\overline{w}})^{-k_1-1}, 
\] 
where we have matched the two complex embeddings of $L_{\infty}$ with the two places $w, \overline{w}$ above $p$.
The finite part of an algebraic Gr{\"o}{\ss}encharacter takes values in a number field. Moreover, $\til{\theta}'$ factors through the compact group $\A_L^*/L^* L_{\infty}^*$, so it takes values in $\mathcal{O}_F^*$ for some finite extension $F/\Q_p$. 

Our assumptions imply that $\pi(\til{\theta})_p \cong \operatorname{Ind}_B^{\GL_2(\Qbar_p)}(\til{\theta}_w, \til{\theta}_{\overline{w}})$ and the two refinements of $\pi(\til{\theta})$ are given by $(\til{\theta}_w,\til{\theta}_{\overline{w}})$ and $(\til{\theta}_{\overline{w}},\til{\theta}_w)$. 

Let $p_w=(1, \dotsc,1,p,1,\dotsc,1) \in \A^*_L$ (respectively $ p_{\overline{w}})$ denote the idele which is $1$ at all places except for $w$ (respectively $\overline{w}$), where it equals $p$. Then
\[\til{\theta}'(p_w) = \iota_p(\til{\theta}_{w}(p) p^{1/2})p^{-k_2} = \psi_{(\pi(\til{\theta}),(\til{\theta}_{\overline{w}}, \til{\theta}_{w}))}(U_p) \ \text{ and}\] 
\[\til{\theta}'(p_{\overline{w}}) = \iota_p(\til{\theta}_{\overline{w}}(p) p^{1/2})p^{-k_1-1} = \psi_{(\pi(\til{\theta}),(\til{\theta}_{w}, \til{\theta}_{\overline{w}}))}(U_p)p^{k_2-k_1-1}.\]
As $\til{\theta}'(p_w)$ and $\til{\theta}'(p_{\overline{w}})$ are in $\mathcal{O}_F^*$ this implies the claim on the slopes of the $U_p$-eigenvalues. Using Equation (\ref{u0}), one then verifies the slope of $\psi_{\til{x}}(u_0)$. 
\end{proof}

\begin{rem} The character $\til{\theta}':\A_L^*/L^* \rightarrow \Qbar_p^*$ in the proof of the above lemma is trivial on $L_\infty^*$, so it factors through the quotient $\A_L^*/L^*L^*_\infty\cong G_L^{ab}$. We may therefore view $\til{\theta}'$ as a continuous character of $G_L$ with values in $\Qbar_p^*$.
In this notation the Galois representation $\rho_{\pi(\til{\theta})}:G_{\Q}\rightarrow \GL_2(\Qbar_p)$ attached to $\pi(\til{\theta})$ is given by 
\[\rho_{\pi(\til{\theta})}\cong\operatorname{Ind}^{G_\Q}_{G_L}(\til{\theta}')\]
as one easily checks by comparing traces of Hecke operators and Frobenius.  
\end{rem}

\begin{lem} \label{inertslope}
Let $\pi(\til{\theta})$ be an automorphic representation of $\til{G}(\A)$ of tame level~$\til{e}$, weight $(k_1,k_2)$ and unramified at $p$ which is associated to a Gr{\"o}{\ss}encharacter $\til{\theta}:\A^*_L/L^* \rightarrow \C^*$ and assume that $p$ is inert in $L$. Then $\pi(\til{\theta})$ gives rise to two distinct points $x, y$ on $\D(\til{e})$. Their slopes agree and are equal to 
\[ v_p(\psi_x(U_p))=v_p(\psi_y(U_p))=(k_1-k_2+1)/2.  
\]\end{lem}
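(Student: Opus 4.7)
The plan is to adapt the split-case argument of Lemma \ref{cs} to the inert setting. The key new ingredient is a concrete local description of $\pi(\til{\theta})_p$: since $p$ is inert, $L_w/\Q_p$ is the unramified quadratic extension, and the unramified character $\til{\theta}_w$ is automatically $\Gal(L_w/\Q_p)$-invariant (it is trivial on $\mathcal{O}_{L_w}^*$, and the nontrivial $\sigma$ fixes the uniformizer $p \in \Q_p \subset L_w$). Hence $\til{\theta}_w = \eta \circ N_{L_w/\Q_p}$ for an unramified character $\eta$ of $\Q_p^*$ satisfying $\eta(p)^2 = \til{\theta}_w(p)$. Local automorphic induction---or equivalently a direct comparison of local $L$-factors, noting $\pi(\til{\theta})_p = \tau(\til{\theta})_p$ since $p \notin S_B$---then gives
\[
\pi(\til{\theta})_p \;\cong\; \operatorname{Ind}_B^{\GL_2(\Q_p)}(\eta,\,\eta\chi),
\]
where $\chi = \chi_{L_w/\Q_p}$ is the nontrivial unramified quadratic character of $\Q_p^*$, characterised by $\chi(p) = -1$.

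Next I would read off the refinements. Since $\eta \neq \eta\chi$ and $\chi \neq |\cdot|^{\pm 1}$, this principal series is irreducible and admits exactly two refinements, $(\eta,\eta\chi)$ and $(\eta\chi,\eta)$, giving points $x, y \in \D(\til{e})$. Applying the general formula $\psi_{(\til{\pi},(\chi_1,\chi_2))}(U_p) = \iota_p(\chi_2(p) p^{1/2}) p^{-k_2}$ recalled before Lemma \ref{cs}, one computes
\[
\psi_x(U_p) = -\iota_p(\eta(p))\,p^{1/2-k_2}, \qquad \psi_y(U_p) = \iota_p(\eta(p))\,p^{1/2-k_2}.
\]
These values are nonzero and differ by a sign, so $x \neq y$ (using the injection $\D(\til{e})(\Qbar_p) \hookrightarrow \operatorname{Hom}(\til{\mathcal{H}}_S,\Qbar_p) \times \til{\mathcal{W}}(\Qbar_p)$), and they share the common slope $v_p(\eta(p)) + 1/2 - k_2$.

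It remains to determine $v_p(\eta(p))$. I would form the $p$-adic avatar $\til{\theta}': \A_L^*/L^* \to \Qbar_p^*$ of the algebraic Grössencharacter $\til{\theta}_0 := \til{\theta}\,\|N_{L/\Q}(\cdot)\|^{-1/2}$ exactly as in the proof of Lemma \ref{cs}, now observing that in the inert case the two embeddings $\tau_w, \tau_{\overline{w}}: L \hookrightarrow \Qbar_p$ both restrict to the identity on $\Q \subset \Q_p$. Evaluating at the idele $p_w$ (with entry $p \in L_w^*$ above $p$, trivial elsewhere) and using $\|N_{L/\Q}(p_w)\|_{\A_\Q}^{-1/2} = p$ (because $N_{L_w/\Q_p}(p) = p^2$) yields
\[
\til{\theta}'(p_w) \;=\; \iota_p(\til{\theta}_w(p))\,p^{-k_1-k_2} \;=\; \iota_p(\eta(p))^{2}\,p^{-k_1-k_2}.
\]
Since $\til{\theta}'$ factors through the compact quotient $\A_L^*/L^* L_\infty^*$, the left-hand side lies in $\mathcal{O}_F^*$ for some finite $F/\Q_p$; this forces $v_p(\eta(p)) = (k_1+k_2)/2$, and the common slope equals $(k_1+k_2)/2 + 1/2 - k_2 = (k_1-k_2+1)/2$, as asserted.

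The only real obstacle is bookkeeping: translating the split-case definition of $\til{\theta}'$ into the inert setting, where the two ``$p$-components'' of $\A_L^*$ collapse into the single quadratic extension $L_w$. The substantive change is the factor $\|N_{L/\Q}(p_w)\|_{\A_\Q}^{-1/2} = p$ (instead of $p^{1/2}$ in the split case), and this extra factor of $p^{1/2}$ is precisely what shifts the slope from the split-case values $\{0, k_1-k_2+1\}$ to the symmetric value $(k_1-k_2+1)/2$.
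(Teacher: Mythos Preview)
Your argument is correct and is essentially the paper's own proof: you identify $\pi(\til\theta)_p$ as the unramified principal series $\operatorname{Ind}_B^{\GL_2(\Q_p)}(\eta,\eta\chi)$ with $\chi$ the quadratic character of the inert extension, read off the two refinements (which differ by a sign on $U_p$, hence are distinct with equal slope), and then pin down $v_p(\eta(p))$ by evaluating a $p$-adic avatar of the algebraic Gr\"o{\ss}encharacter at the idele $p_w$. The paper does exactly this, with $\delta$ in place of your $\eta$ and with the $p$-adic shift written as $(N_{L_v/\Q_p}(x_v))^{-k_1-1}\tau_v(x_v)^{k_1-k_2+1}$ rather than via the two embeddings; plugging in $x_v=p$ both formulations give the same factor $p^{-k_1-k_2-1}$, so the computations coincide.
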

\begin{proof} Let $v$ denote the unique place above $p$, and let $\omega:\Q^*_p \rightarrow \C^*$ be the character associated to the quadratic extension $L_v/\Q_p$ by local class field theory. By assumption $\til{\theta}_v$ is unramified and therefore factors through the norm $\operatorname{N}_{L_v/\Q_p}:L_v^* \rightarrow \Q_p^*$. Let $\delta:\Q_p^* \rightarrow \C^*$ be a character such that $\til{\theta}_v = \delta \circ \operatorname{N}_{L_v/\Q_p}$. Then by construction $\pi(\til{\theta})_p = \operatorname{Ind}_B^{\GL_2(\Q_p)} (\delta, \delta \omega)$, in particular, the two refinements are distinct. The $U_p$-eigenvalues on the two points $x$ and $y$ are given by
\[\psi_x(U_p)= \iota_p(\delta(p)p^{1/2})p^{-k_2},\]
\[\psi_y(U_p) = \iota_p(\delta(p)\omega(p)p^{1/2})p^{-k_2}= \iota_p(-\delta(p)p^{1/2})p^{-k_2}.\]
In particular, we see that they have the same slope. A similar calculation as in the proof of the last lemma, with
\[
\til{\theta}'(x) =\iota_p (\til{\theta}_0(x) \til{\theta}_{0,\infty}^{-1}(x_{\infty})) (\operatorname{N}_{L_v/\Q_p}(x_v))^{-k_1-1}\tau_{v}(x_v)^{k_1-k_2+1}, 
\] 
implies that the slope is given by $(k_1-k_2+1)/2$.
\end{proof}

\section{Existence of $L$-indistinguishable forms}

Let $q \geq 5 $ be a prime number such that $-q \equiv 1\mod 4$ and let $L:= \Q(\sqrt{-q})$ be the associated imaginary quadratic extension. 
Choose a prime $p$ which splits in $L$ and let $B$ be the quaternion algebra over $\Q$ such that $S_B=\{q,\infty\}$. Let $\til{G}$ and $G$ be as above. 

Let $\pi(\til{\theta})$ be an automorphic representation of $\widetilde{G}(\A)$ coming from a Gr{\"o}{\ss}encharacter 
$\til{\theta}: \A_L^*/L^* \rightarrow \C^*$ 
of $L$. Assume that 
\begin{enumerate}
	\item $\pi(\til{\theta})_l$ is unramified for all $l \neq q$.
	\item The $L$-packet $\Pi(\pi(\til{\theta})_q)= \{\tau_1,\tau_2\}$ defined by $\pi(\til{\theta})_q$ is of size two. 
	\item Precisely one of the representations 
	\[\pi_1:=\bigotimes_{l \neq q} \pi^0_l \otimes \tau_1 \otimes \pi_{\infty}, \ \pi_2:= \bigotimes_{l \neq q} \pi^0_l \otimes \tau_2 \otimes \pi_{\infty}\]
	is automorphic. As before $\pi_l^0$ denotes the unique member of the local $L$-packet $\Pi(\pi(\til{\theta})_l)$, which has a non-zero fixed vector under $\SL_2(\Z_l)$.  
\end{enumerate}
\begin{lem} Automorphic representations $\pi(\til{\theta})$ of $\widetilde{G}(\A)$ satisfying the above list of properties exist.
\end{lem}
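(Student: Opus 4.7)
My plan is to construct $\til{\theta}$ explicitly from the theory of algebraic Hecke characters of $L$, then read off conditions (1) and (2) directly, while deriving (3) from the Labesse--Langlands multiplicity formula for the inner form $G$.

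First I would construct $\til{\theta}$. By Weil's existence theorem for algebraic Hecke characters, for the imaginary quadratic field $L = \Q(\sqrt{-q})$ and any positive integer $n$ there exists an algebraic Hecke character of $L$ of infinity type $z \mapsto z^n$, and any two such differ by a finite-order character of $\A_L^*/L^*L_\infty^*$. Since $L/\Q$ is ramified only at $q$, I can adjust $\til{\theta}$ by a finite-order twist so that it is unramified at every finite place $\neq q$ and produces an automorphic representation $\pi(\til{\theta})$ of the desired weight $(k_1,k_2)$. Condition (1) is then immediate.

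For (2), since $q$ ramifies in $L$ the representation $\tau(\til{\theta})_q$ is a dihedral supercuspidal as soon as $\til{\theta}_q \neq \til{\theta}_q^\sigma$, where $\sigma$ generates $\Gal(L_q/\Q_q)$. The size of the local packet of $\tau(\til{\theta})_q|_{\SL_2(\Q_q)}$ equals the order of the group $X(\tau) \subset \widehat{\Q_q^*/(\Q_q^*)^2}$ of quadratic characters fixing $\tau(\til{\theta})_q$ under twisting. This group always contains the trivial character and the character $\omega_{L_q/\Q_q}$ attached to $L_q/\Q_q$, so the packet has size at least $2$. Size exactly $2$ holds unless $\til{\theta}_q^{-1}\til{\theta}_q^\sigma$ is of the form $\eta \circ N_{L_q/\Q_q}$ for a quadratic character $\eta$ coming from one of the other two quadratic extensions of $\Q_q$; these exceptional self-twists are cut out by finitely many algebraic conditions on $\til{\theta}_q$ and can be avoided by a further finite-order twist.

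For condition (3), the local $L$-packet of $\pi(\til{\theta})_v$ has size $1$ at every $v \neq q$: at unramified finite places (including $p$) principal series restrict irreducibly to $\SL_2$, and at $\infty$ the irreducible representation of $\til{G}(\R)$ stays irreducible on the compact subgroup $G(\R)$. Hence the global packet $\Pi(\pi(\til{\theta}))$ on $G(\A)$ has exactly the two members $\pi_1, \pi_2$ of the statement. The Labesse--Langlands multiplicity formula for $G$, as developed in \cite{p-adicLL}, then gives $m(\pi_i) = \tfrac{1}{2}(1 + \epsilon(\pi_i))$ with $\epsilon(\pi_i) = \prod_v \epsilon_v((\pi_i)_v) \in \{\pm 1\}$ a product of local signs built from the endoscopic character attached to $\Gal(L/\Q)$. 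Since $\pi_1$ and $\pi_2$ agree outside $q$ and are the two members of a size-$2$ local packet at $q$, their local signs at $q$ are opposite, so $\epsilon(\pi_1)\epsilon(\pi_2) = -1$ and exactly one of them is automorphic. The hard part will be the packet-size step above, ruling out the exceptional case of size $4$; this is generic in $\til{\theta}$ but must be verified for the explicit choice.
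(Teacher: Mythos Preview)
Your overall strategy matches the paper's: build $\til\theta$ by hand, then check (1), (2), (3). But step (3) has a genuine gap. You assert that because $\tau_1,\tau_2$ are the two members of a size-$2$ local packet at $q$, their local signs $\epsilon_q(\tau_i)$ are automatically opposite. This is precisely where the inner-form case diverges from the quasi-split one: as the remark immediately following the paper's proof emphasizes, for the norm-one group of a division algebra \emph{changing the local component at a place of $S_B$ where the packet has size two need not change the multiplicity}. On $G(\Q_q)$ the packet members are not naively indexed by all characters of the relevant centralizer, and the endoscopic sign can agree on both. The paper handles this by choosing $\til\theta$ at the place $v\mid q$ so that its restriction to the norm-one units $\mathcal{O}_v^1$ is \emph{non-quadratic}; this gives some $\gamma\in\mathcal{O}_v^1$ with $\til\theta_v(\gamma)\neq\til\theta_v(\bar\gamma)$, which via Lemma~7.1 of \cite{LL} simultaneously yields packet size $2$ and places $\pi_1,\pi_2$ in ``type (a)'', where Proposition~7.3 of \cite{LL} does give the opposite-sign behaviour you want. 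Your ``avoid size $4$ by a generic twist'' step is not the same condition, and without the type-(a) input your sign claim is unjustified.

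A smaller inaccuracy: your assertion that the local packet is a singleton at every $v\neq q$ fails at primes $l$ inert in $L$, where $\pi(\til\theta)_l\cong\operatorname{Ind}(\delta,\delta\omega)$ with $\omega$ the quadratic character of $L_l/\Q_l$, so the $\SL_2$-packet has size $2$. This does not by itself break (3), since $\pi_1$ and $\pi_2$ still differ only at $q$; but the sentence ``the global packet has exactly two members'' is false, and you should argue directly from the multiplicity formula for the specific $\pi_1,\pi_2$ rather than from a count of the global packet.
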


\begin{proof}
Note that $(\prod_{w \neq \infty} \mathcal{O}^*_w \times L^*_{\infty})/ \mathcal{O}^*_L \hookrightarrow \A^*_L/L^* $ is of finite index. Furthermore, our assumptions imply that $\mathcal{O}^*_L = \{1,-1\}$ and that $q$ is the only prime that ramifies in $L$. Define a Gr\"o{\ss}encharacter $\til{\theta}:\A^*_L/L^*\rightarrow \C^*$ as follows:

Let $\til{\theta}_{\infty}: L^*_{\infty}\rightarrow \C^*$ be the character given by $\til{\theta}_{\infty}(z) \mapsto (z\overline{z})^r z^m$, where $r \in \C $ and $m \geq 2$ is an even integer, so that $\til{\theta}_{\infty}$ is trivial on $\mathcal{O}^*_L$.

Denote by $v$ the unique place of $L$ above $q$. For all $w \neq v$ let $\til{\theta}_w: \mathcal{O}_w^* \rightarrow \C^*$ be the trivial character. 

For $v$, let $\mathcal{O}^1_{v}$ be the kernel of the norm map $(\operatorname{N}_{L_{v}/\Q_q})|_{\mathcal{O}^*_{v}}$. Choose any continuous non-quadratic character 
\[\theta'_{v}:\mathcal{O}^1_{v}/\{1,-1\}\rightarrow \C^*\]
and extend it to a continuous character 
\[\til{\theta}_{v}:\mathcal{O}^*_{v} \rightarrow \C^*.\]
Now the character 
\[\prod_{w \neq \infty} \til{\theta}_w \times \til{\theta}_\infty:\left(\prod_{w \neq \infty} \mathcal{O}^*_w \times L^*_{\infty}\right)/ \mathcal{O}^*_L \rightarrow \C^*\]
is continuous. Extend it arbitrarily to a character $\til{\theta}$ of $L^*\backslash \A^*_L $. 

We verify the conditions (1)--(3) for $\pi(\til{\theta})$. 
By construction $\til{\theta}_w$ is unramified for all finite places $w$ not equal to $v$, and the local extensions $L_w/\Q_l$ are unramified for $q \neq l$, which implies (1). 
Part (2) follows from Lemma 7.1 of \cite{LL}. As we have chosen a non-quadratic character $\theta'_{v}$ in the construction of $\til{\theta}_{v}$, there exists an element $\gamma \in \mathcal{O}^1_{v}$ such that $\til{\theta}_{v}(\gamma) \neq \til{\theta}_{v}(\gamma^{-1}) = \til{\theta}_{v}(\overline{\gamma})$. The other conditions of Lemma 7.1 of ~\cite{LL} are also satisfied by construction. 

Part (3) follows from the multiplicity formulae. The representations $\pi_1$ and $\pi_2$ are of type (a) and the formula for their multiplicity is given in Proposition 7.3 of ~\cite{LL}.  
\end{proof}
\begin{rem} The reason for this slightly delicate choice of the local character at the place above $q$ in the above proof is that we are constructing $L$-packets of an inner form of $\SL_2$, which is not quasi-split. Changing a representation in a global endoscopic packet at a place where the local $L$-packet is of size two therefore not always changes the multiplicity. 
\end{rem}

Now fix an automorphic representation $\pi(\til{\theta})$ as above and such that 
\[
\til{\theta}_{\infty}(z) \mapsto (z\overline{z})^{-k_1-1/2} z^{k_1-k_2+1},
\] 
where $k_1, k_2 \in \Z$ and $k_1-k_2+1\geq 2$ is an even integer. In particular, 
\[
\pi(\til{\theta})_\infty \cong (\operatorname{Sym}^{k_1-k_2}(\C^2) \otimes \operatorname{Nrd}^{k_2})^*.
\]
We have two representations $\pi_1$ and $\pi_2$ as above and we assume that $\pi_2$ is automorphic and $\pi_1$ is not. 

The representation $\pi(\til{\theta})$ shows up in the following eigenvariety:  
for all $l \neq q$ define $\til{e}_l := e_{\GL_2(\Z_l)}$ and let $\til{e}_q$ be the special idempotent attached to the Bernstein component defined by the supercuspidal representation $\pi(\til{\theta})_q$. Define $\widetilde{e}= \otimes_l \widetilde{e}_l \in C_c^{\infty}(\widetilde{G}(\A_f^p),\Qbar)$. Let $S=S(\til{e})=\{p,q\}$ and $\til{\mathcal{H}}_S:= \til{\mathcal{H}}_{ur,S}\otimes \til{\mathcal{A}}_p$ as in Section 2. Then by construction $\pi(\til{\theta})$ gives rise to two points on the eigenvariety $\D(\til{e})$, one of which is of critical slope by Lemma \ref{cs}, which we denote again by $\til{x}$. 

Let $e_{q,1}$ (respectively $e_{q,2}$) $\in C^{\infty}_c(G(\Q_q),\Qbar)$ be the special idempotent associated with~$\tau_1$ (respectively ~$\tau_2$) and define 
\[e_1 := \bigotimes_{l \neq q,p} e_{\SL_2(\Z_l)} \otimes e_{q,1} \in C^{\infty}_c(G(\A_f^p),\Qbar) \ \text{and} \]
\[e_2 := \bigotimes_{l \neq q,p} e_{\SL_2(\Z_l)} \otimes e_{q,2} \in C^{\infty}_c(G(\A_f^p),\Qbar).\]

\begin{thm}\label{mainthm}
 There exist points $x_1 \in \D(e_1)(\Qbar_p)$ and $x_2 \in \D(e_2)(\Qbar_p)$ such that 
\[ 
(\psi_{x_1},\omega(x_1)) = (\psi_{x_2},\omega(x_2)) = (\psi_{\til{x}}|_{\mathcal{H}_S}, \mu(\omega(\til{x}))).
\]
\end{thm}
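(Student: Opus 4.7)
The plan is to interpose an intermediate eigenvariety $\D(e)$ for $G$ through which both $\D(e_1)$ and $\D(e_2)$ embed as closed analytic subspaces, to transfer $\til{x}$ to a point $x\in\D(e)$ via the $p$-adic Labesse--Langlands morphism of Theorem~\ref{p-adictransfer}, and then to verify that $x$ lies in each of these subspaces. Concretely, I set $e:=\bigotimes_{l\neq q,p} e_{\SL_2(\Z_l)}\otimes e_q$, where $e_q$ is the special idempotent attached to the finite set of Bernstein components $\Res^{\til{G}}_G(\mathfrak{s}_q)$, with $\mathfrak{s}_q$ the Bernstein component containing $\pi(\til{\theta})_q$. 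By Theorem~\ref{p-adictransfer}(2) the idempotents $\til{e}$ and $e$ are Langlands compatible, so Theorem~\ref{p-adictransfer}(3) yields a morphism $\zeta:\D(\til{e})\to\D(e)$; I put $x:=\zeta(\til{x})$. The commuting squares in Theorem~\ref{p-adictransfer}(3) give $(\psi_x,\omega(x))=(\psi_{\til{x}}|_{\mathcal{H}_S},\mu(\til{\omega}(\til{x})))$. Since $e_{q,1}$ and $e_{q,2}$ are attached to single Bernstein components contained in $\Res^{\til{G}}_G(\mathfrak{s}_q)$, Remark~\ref{dividemp} provides closed immersions $\iota_i:\D(e_i)\hookrightarrow\D(e)$ compatible with $\omega$ and with the Hecke actions, and it suffices to show $x\in\iota_i(\D(e_i))$ for $i=1,2$.

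For $i=2$, I would obtain $x_2$ directly from the automorphic representation $\pi_2$. Since $e_2\cdot(\pi_2)_f^p\neq 0$, and since $p$ splits in $L$ with $\pi(\til{\theta})_p$ unramified, $(\pi_2)_p$ admits Iwahori-fixed vectors and a refinement whose $u_0$-eigenvalue matches that of the critical slope refinement $\til{x}$ under the identification $\lambda(u_0)=u_0$. This produces a classical point $x_2\in\D(e_2)$ whose image in $\Hom(\mathcal{H}_S,\Qbar_p)\times\mathcal{W}(\Qbar_p)$ coincides with that of $x$, and the injectivity of the embedding forces $\iota_2(x_2)=x$.

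The case $i=1$ is the essential content, since $\pi_1$ is not automorphic and so no classical point of $\D(e_1)$ maps to $x$. I instead show by a density argument that $Y:=\zeta^{-1}(\iota_1(\D(e_1)))\subset\D(\til{e})$, a closed analytic subset, contains $\til{x}$. Let $\til{z}\in\D(\til{e})$ be a classical point arising from an automorphic representation $\til{\pi}'$ of $\til{G}(\A)$ which does \emph{not} arise from a Gr\"o{\ss}encharacter of any imaginary quadratic field. Then the global $L$-packet $\Pi(\til{\pi}')$ is stable, and the Labesse--Langlands multiplicity formulae make every member of $\Pi(\til{\pi}')$ automorphic; in particular the unique $\pi'\in\Pi(\til{\pi}')$ with $e_1\cdot(\pi')_f^p\neq 0$ is automorphic. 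A refinement at $p$ compatible with that of $\til{z}$ then produces a classical point of $\D(e_1)$ mapping to $\zeta(\til{z})$, so $\til{z}\in Y$. Provided such non-CM classical points accumulate at $\til{x}$, the closedness of $Y$ yields $\til{x}\in Y$, hence $x\in\iota_1(\D(e_1))$.

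The main obstacle is precisely this last accumulation statement at the critical-slope CM point $\til{x}$. Standard small-slope classicality and weight-density arguments provide Zariski-density of classical points near $\til{x}$, but one must further exclude the possibility that all these classical points are themselves CM. Since a CM classical point corresponds to a Gr\"o{\ss}encharacter of some imaginary quadratic field $L'$ in which $p$ splits, the CM locus is a countable union of low-dimensional families inside the two-dimensional $\D(\til{e})$; a careful dimension argument on an irreducible component of $\D(\til{e})$ through $\til{x}$ should exhibit this locus as a proper analytic subset, leaving an abundance of non-CM classical points available to complete the density step above.
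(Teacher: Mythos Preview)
Your overall architecture matches the paper's: build the intermediate idempotent $e$, use the $p$-adic transfer $\zeta$, embed $\D(e_i)\hookrightarrow\D(e)$, handle $x_2$ directly via the automorphic $\pi_2$, and for $x_1$ argue that non-CM classical points land in $\D(e_1)$ and are dense near $\til{x}$. The paper runs essentially the same plan, though it passes through the auxiliary variety $\D'(\til{e})$ and uses that $\xi:\D'(\til{e})\to\D''(e)$ is a closed immersion (Remark~\ref{diagram}) to argue component-by-component rather than via preimages; your preimage formulation is equally valid.

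The real issue is the step you flag yourself: showing that non-CM classical points accumulate at the critical-slope point $\til{x}$. Your proposed justification---that the CM locus is a countable union of low-dimensional families, hence a proper analytic subset by a dimension count---does not go through as stated. In rigid analytic geometry there is no Baire-category principle: a countable union of nowhere-dense Zariski-closed subsets can be Zariski dense, so you cannot conclude that its complement meets every neighbourhood of $\til{x}$. You also restrict attention to CM by fields $L'$ in which $p$ \emph{splits}; but a classical point on $\D(\til{e})$ can equally arise from a Gr\"o{\ss}encharacter of a field in which $p$ is inert, and these must be excluded too.

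The paper replaces this soft argument by a finite, explicit one using the slope computations of Lemmas~\ref{cs} and~\ref{inertslope}. One first restricts to an affinoid neighbourhood $U$ of $\lambda'(\til{x})$ on which $v_p(\psi(u_0))$ is constant, equal to $2(k_1-k_2+1)$. If a classical point $z\in U$ is CM of weight $(k_1',k_2')$, then Lemma~\ref{cs} (split case) forces $v_p(\psi_z(u_0))\in\{0,\,2(k_1'-k_2'+1)\}$, while Lemma~\ref{inertslope} (inert case) forces $v_p(\psi_z(u_0))=k_1'-k_2'+1$. Matching with $2(k_1-k_2+1)$ pins $k_1'-k_2'$ to one of the two values $y=k_1-k_2$ or $y'=2(k_1-k_2)+1$. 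Thus all CM classical points of $U$ lie over the finite set $\{y,y'\}\subset\mathcal{W}$ under $\mu\circ\omega'$, i.e.\ in a single Zariski-closed subspace of $U$ of codimension one. Removing it leaves the classical set Zariski dense and CM-free, which is exactly the accumulation input your argument needs. Plugging this in for your last paragraph completes the proof.
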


\begin{proof} By construction $\pi_2$ is an automorphic representation such that $e_2 \cdot (\pi_2)_f^p \neq 0$ and so there is a point $x_2 \in \D(e_2)(\Qbar_p)$ as claimed. 

For the existence of $x_1$ we use the $p$-adic transfer. Recall the notation of Remark~\ref{diagram}. 
We have a Zariski-dense and accumulation set $Z'$ on $\D'(\widetilde{e})$, which is in bijection with the set of pairs 
\[\{(\lambda' \circ \psi_{(\til{\pi},\chi)}, (k_1,k_2))\},\]
where $\til{\pi}$ is a $p$-refined automorphic representation of $\til{G}(\A)$ of weight $(k_1,k_2)$ (cf.\ Section 3.3 of \cite{p-adicLL}). In particular, $\lambda'(\til{x}) \in Z'$. 

Let $\Pi_s$ be the set of all stable $L$-packets of $G(\A)$ and let 
\[ Z'_s:= \{ (\lambda' \circ \psi_{(\til{\pi},\chi)}, (k_1,k_2)) \in Z' \ | \ \Pi(\til{\pi}) \in \Pi_s \}\]
be the subset of $Z'$ arising from representations $\til{\pi}$ that do not come from a Gr{\"o}{\ss}en-character. This is well-defined (cf.\ \cite{p-adicLL} Section 3.3.1). 

\textbf{Claim}: There exists an open affinoid neighbourhood $U$ of $\lambda'(\til{x}) \in \D'(\til{e})(\Qbar_p)$ such that $Z'_s \cap U$ is Zariski-dense and accumulation in $U$.
Indeed choose any open affinoid neighbourhood $W$ of $\lambda'(\til{x})$ and let  
\[
V:=\{x \in W | \ v_p(\psi_x(u_0)) = v_p(\psi_{\lambda'(\til{x})}(u_0))=2(k_1-k_2+1)\}.
\]
This is an affinoid neighbourhood of $\lambda'(\til{x})$. Choose $U \subset V$ to be an open affinoid neighbourhood of $\lambda'(\til{x})$ with the property that $\omega'(U) \subset \til{\mathcal{W}}$ is open affinoid and the induced morphism $\omega'|_U: U \rightarrow \omega'(U)$ is finite and surjective when restricted to any irreducible component of $U$. 

To see that $Z'_s \cap U$ is Zariski-dense and accumulation, let $y:= \mu (\til{\omega}(\til{x}))=k_1-k_2$ and $y':= 2(k_1 - k_2) +1 \in \mathcal{W}(E)$ and define $Y:=(\mu \circ \omega')|_U^{-1}(\{y,y'\})$ to be the fibre, which is a Zariski-closed subspace of $U$ of codimension 1. 
Let $U':= U \backslash Y $. Then $Z' \cap U' \subset Z'_s$ by Lemma \ref{cs} and Lemma \ref{inertslope} above. But $Z' \cap U'$ is still Zariski-dense and accumulation in $U$, as we have only removed a Zariski-closed subset of smaller dimension. This proves the claim.

Define $e:= \bigotimes_{l \neq q,p} e_{\SL_2(\Z_l)} \otimes e_{q} \in C^{\infty}_c(G(\A_f^p),\Qbar)$, where $e_{q}$ is the special idempotent associated with the two Bernstein components defined by the representations ~$\tau_1$ and ~$\tau_2$.  
Then $\til{e}$ and $e$ are Langlands compatible and we have a $p$-adic transfer as in Theorem \ref{p-adictransfer}.

By Remark \ref{dividemp} we have a closed immersion $\D(e_1) \hookrightarrow \D(e)$, which we base-change along $\mu:\til{\mathcal{W}}\rightarrow \mathcal{W}$ to $\iota:\D''(e_1) \hookrightarrow \D''(e)$. Consider the following diagram (cf.\ Remark \ref{diagram})

\[
\xymatrix{ & & \D''(e_1) \ar[d]^{\iota} \ar[r] & \D(e_1) \ar[d] \\
\D(\til{e}) \ar[r]^{\lambda'} & \D'(\til{e})  \ar[r]^{\xi} &  \D''(e) \ar[r] &\D(e)}.
\]

As $\xi$ and $\iota$ are closed immersions of equi-dimensional rigid analytic spaces, their images are a union of irreducible components of $\D''(e)$. 
We identify $\D'(e)$ and $\D''(e_1)$  with their images in $\D''(e)$, i.e., we consider them as subspaces of $\D''(e)$.

Let $T$ be an irreducible component of $\D'(\til{e})$ containing $\lambda'(\til{x})$. Let $U$ be as above. Then $U \cap Z_s' \cap T$ is still Zariski-dense in $U \cap T$ and so there exists a point $s \in Z'_s \cap T$ and we can also assume that $s \notin T' \cap T$ for any irreducible component $T' \neq T$. As $s \in Z'_s$, $s$ comes from an automorphic representation that gives rise to a stable $L$-packet for $G$, which implies that $s \in \D''(e_1)$. Therefore $T \subset \D''(e_1)$ and in particular, $\lambda'(\til{x}) \in \D''(e_1)$.
\end{proof}

\begin{rem}
One can of course cook up other examples by using more general imaginary quadratic fields $L$ and quaternion algebras $B$ with more ramified primes.
For example assume that $B$ is ramified at more places and $\pi'$ is a representation in an endoscopic $L$-packet $\Pi(\til{\pi})$, such that $\pi'_l$ is unramified for all $l \notin S_B$, with $m(\pi')=0$, and such that $\Pi(\til{\pi}_p)$ has size one. Then one can again use the special idempotents at the bad places to construct idempotents $e \in C^{\infty}_c(G(\A_f^p),\Qbar)$ such that 
\[ 
e \cdot \pi_f^p \neq 0 \text{ for } \pi \in \Pi(\til{\pi}) \text{ if and only if } \pi = \pi'.
\]
In fact this trick works as long as $\pi'_l$ is supercuspidal at all places where it is not unramified.  
\end{rem}

\begin{corollary} In the notation of Theorem \ref{mainthm} define $\varphi:= \psi_{\til{x}}|_{\cH_S}$ and let $n=\mu(\til{\omega}(\til{x})) = k_1-k_2 \in \mathcal{W}(E)$. The eigenspaces $M(e_1,n)^{\varphi}$ and $M(e_2,n)^{\varphi}$ are both non-zero. The Galois representations attached to the eigenforms in these two spaces agree. 
\end{corollary}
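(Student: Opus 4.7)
The plan splits into two essentially independent parts: non-vanishing of the eigenspaces, and identification of the Galois representations.

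For the non-vanishing, I will simply translate the conclusion of Theorem \ref{mainthm} back to the level of Banach modules. That theorem produces points $x_i \in \D(e_i)(\Qbar_p)$ with $\psi_{x_i} = \varphi$ and $\omega(x_i) = n$. By the construction of $\D(e_i)$ via Buzzard's eigenvariety machine applied to the spaces $M(e_i,\underline{k},k(X))$ of \cite{david} (recalled in Section 2 of the present paper, just before the classicality theorem), the existence of a $\Qbar_p$-point of $\D(e_i)$ of weight $\underline{k}$ with a given finite-slope system of Hecke eigenvalues is equivalent to the existence of a non-zero finite-slope eigenform in $M(e_i,\underline{k},k(X))\widehat{\otimes}\Qbar_p$ with that system of eigenvalues. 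Specialising to $\underline{k}=n$ and the system $\varphi$ gives $M(e_i,n)^{\varphi}\neq 0$ for $i=1,2$. This step is a pure unpacking of definitions and should present no obstacle.

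For the comparison of Galois representations, the idea is to apply Lemma \ref{galoisreps} to both $\D(e_1)$ and $\D(e_2)$ to obtain $3$-dimensional pseudo-representations
\[
T_i \colon G_{\Q,S} \longrightarrow \mathcal{O}(\D(e_i)), \qquad T_i(\Frob_l)=\psi(h_l) \quad \forall\, l\notin S.
\]
Evaluating at $x_i$ produces pseudo-characters $T_{x_i}\colon G_{\Q,S}\to \Qbar_p$ satisfying $T_{x_i}(\Frob_l)=\psi_{x_i}(h_l)=\varphi(h_l)$ for every $l\notin S$, by the defining property of $\varphi$. Since the Frobenii $\{\Frob_l : l\notin S\}$ are dense in $G_{\Q,S}$ by Chebotarev density, and a pseudo-character is continuous and determined by its values on a dense set, we conclude $T_{x_1}=T_{x_2}$ identically. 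The associated semisimple Galois representations, which are the Galois representations attached to the eigenforms in $M(e_i,n)^{\varphi}$ in the sense of Lemma \ref{galoisreps}, are therefore isomorphic.

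The one conceptual point to flag — more a matter of bookkeeping than a real obstacle — is that the Galois representations on the $G=\mathrm{SL}_2$-side are the $3$-dimensional adjoint representations (with image in $\operatorname{SO}_3\cong\PGL_2$), rather than the $2$-dimensional ones $\rho(\til\pi)$ that appear in the proof of Lemma \ref{galoisreps}; the latter are attached to lifts to $\til G$ and are only well-defined up to a twist. The content of the corollary, and of the phrase ``$L$-indistinguishable'' discussed in the introduction, is precisely the equality of these $3$-dimensional adjoint representations, and the Chebotarev argument above delivers exactly this.
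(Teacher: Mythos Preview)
Your proof is correct. The non-vanishing step is exactly the unpacking of definitions the paper has in mind (and does not bother to spell out). For the Galois part you argue slightly differently from the paper: rather than comparing the specialisations $T_{x_1}$ and $T_{x_2}$ directly via Chebotarev, the paper observes that $x_1$ and $x_2$ have the same image in the ambient eigenvariety $\D(e)$ constructed in the proof of Theorem~\ref{mainthm} (via the closed immersions $\D(e_i)\hookrightarrow\D(e)$ of Remark~\ref{dividemp}), and then reads off the common Galois representation from the single pseudo-representation on $\D(e)$. Your Chebotarev argument is more self-contained and avoids reinvoking the auxiliary eigenvariety $\D(e)$; the paper's route is marginally cleaner in that it reuses structure already set up. Both are short and either would be acceptable here.
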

\begin{proof} The Galois representations exist by Lemma \ref{galoisreps} and depend only on the points on the eigenvariety defined by the eigenforms. But the images of $x_1$ and $x_2$ in $\D(e)$, with $e$ as in the proof of Theorem \ref{mainthm}, agree by construction. 
\end{proof}

\section{Consequences}

\begin{defn}\label{clpt}
Let $\omega:\D(e)\rightarrow \mathcal{W}$ (respectively $\til{\omega}:\D(\til{e})\rightarrow \til{\mathcal{W}})$ be an eigenvariety of idempotent type $e$ (respectively $\til{e})$. We call a point $z \in \D(e)(\Qbar_p)$ (resp.\ $\D(\til{e})(\Qbar_p)$) \emph{classical} if there exists $f \in M(e,\omega(z))^{cl}$ (resp.\ $M(\til{e},\til{\omega}(z))^{cl}$) such that $h \cdot f = \psi_z(h) f$ for all $h \in \cH_{S(e)} $ (resp.\ $\til{\cH}_{S(\til{e})})$.  
\end{defn}
For the group $\til{G}$ we have the following phenomenon.
\begin{prop}\label{gl2} Assume $\til{e}' $ and $ \til{e}$ in $C^{\infty}_c(\til{G}(\A_f),\Qbar_p)$ are idempotents with $S(\til{e}')=S(\til{e})=:S$ and assume for all $l \in S$ the local idempotents $\til{e}'_l$ and $\til{e}_l$ are special idempotents associated to Bernstein components. Assume that $\til{e}*\til{e}'=\til{e}'=\til{e}'*\til{e}$ so that we have a closed immersion $h:\D(\til{e}') \hookrightarrow \D(\til{e})$. Assume $z \in \D(\til{e}')(\Qbar_p)$ is such that $h(z)$ is classical. Then $z$ is classical.
\end{prop}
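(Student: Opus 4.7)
Setting $\underline{k} := \til{\omega}(z)$, the plan is to produce a non-zero classical eigenform in $M(\til{e}',\underline{k})^{cl}$ with eigensystem $\psi_z$, which by Definition \ref{clpt} establishes classicality of $z$. Classicality of $h(z)$ supplies a classical eigenform $f \in M(\til{e},\underline{k})^{cl}$ with eigensystem $\psi_{h(z)} = \psi_z$, arising from a classical automorphic representation $\til{\pi}$ of $\til{G}(\A)$ of weight $\underline{k}$ with $\til{e}\cdot \til{\pi}_f^p \neq 0$; via the Jacquet--Langlands correspondence and strong multiplicity one for cuspidal $\GL_2$, this $\til{\pi}$ is uniquely determined (up to isomorphism) by the spherical eigensystem $\psi_z|_{\til{\cH}_{ur,S}}$. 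If I can show $\til{e}'\cdot \til{\pi}_f^p \neq 0$, then applying $\til{e}'$ to $f$ and using $\til{e}'*\til{e} = \til{e}'$ yields a non-zero classical eigenform $\til{e}'\cdot f \in M(\til{e}',\underline{k})^{cl}$ with eigensystem $\psi_z$, and we are done. So the crux is to verify that the Bernstein component of $\til{\pi}_l$ lies in the set $\Sigma'_l$ of Bernstein components underlying the special idempotent $\til{e}'_l$ for each $l \in S\setminus\{p\}$.

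The second step is to analyse the $\psi_z|_{\til{\cH}_{ur,S}}$-eigenspace $W$ inside the full space $M(\underline{k})$ of overconvergent weight-$\underline{k}$ forms (no tame level condition) as a smooth $\til{G}(\A_f^p)$-representation. In Loeffler's construction \cite{david} the overconvergent completion is performed only at $p$, while the $\til{G}(\A_f^p)$-action remains the classical right-translation action. Consequently every smooth irreducible $\til{G}(\A_f^p)$-subquotient of $M(\underline{k})$ unramified outside $S$ must be of the form $\til{\sigma}_f^p$ for some classical automorphic representation $\til{\sigma}$ of $\til{G}(\A)$ unramified outside $S$; strong multiplicity one then forces any such constituent appearing in $W$ to be $\til{\pi}_f^p$, so $W$ is $\til{\pi}_f^p$-isotypic. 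Now $z \in \D(\til{e}')$ gives $M(\til{e}',\underline{k})_{\psi_z} \neq 0$, and this space is contained in $\til{e}'\cdot W$ (using that $\til{e}'$ commutes with $\til{\cH}_S$); the isotypic property therefore forces $\til{e}'\cdot \til{\pi}_f^p \neq 0$, reducing the proposition to paragraph 1.

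The hard part will be precisely pinning down which smooth $\til{G}(\A_f^p)$-representations occur in $M(\underline{k})$, namely the claim that every irreducible constituent unramified outside $S$ is already classical. This should be essentially immediate from the construction in \cite{david}, but if the statement is not readily available in exactly the form required, I would fall back on the Zariski-density and accumulation of classical points in $\D(\til{e}')$ (recalled after Lemma \ref{galoisreps}): near $z$ one finds classical points $w$ whose associated automorphic representations $\til{\pi}^{(w)}$ satisfy $\til{\pi}^{(w)}_l \in \Sigma'_l$ for every $l \in S\setminus\{p\}$, and because the Bernstein-block invariant at $l \neq p$ is discrete along a $p$-adic analytic family of Hecke eigensystems, this information would propagate to the limit and force $\til{\pi}_l \in \Sigma'_l$, again yielding $\til{e}'\cdot \til{\pi}_f^p \neq 0$ and the conclusion of paragraph 1.
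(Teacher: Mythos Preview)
Your overall reduction in the first paragraph is correct and matches the paper's: one must show $\til{e}'_l\cdot\til{\pi}_l\neq 0$ for each $l\in S\setminus\{p\}$, where $\til{\pi}$ is the classical automorphic representation underlying $h(z)$. (One small slip: from $\til{e}'\cdot\til{\pi}_f^p\neq 0$ you do not literally get $\til{e}'\cdot f\neq 0$ for the \emph{given} $f$, since a special idempotent need not act as the identity on an irreducible it detects; but of course some nonzero vector of $\til{e}'\cdot\til{\pi}_f^p$ furnishes the required classical form, so this is harmless.)

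Where your argument and the paper's diverge is in how one pins down the Bernstein component of $\til{\pi}_l$. Your paragraph~2 tries to read it off directly from the $\til{G}(\A_f^p)$-module structure of the full overconvergent space, asserting that every irreducible $\til{G}(\A_f^p)$-subquotient of $M(\underline{k})$ is $\til{\sigma}_f^p$ for some \emph{classical} automorphic $\til{\sigma}$. In the definite setting this is plausible (the away-from-$p$ action is literally right translation on an admissible $\til{G}(\A_f)$-module), but it is not something one can quote from \cite{david}, and making it precise requires controlling how the infinite-dimensional coefficient module at $p$ interacts with the algebraic decomposition $C^\infty(\til{G}(\Q)\backslash\til{G}(\A_f))\cong\bigoplus\til{\sigma}_f$ under completion. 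You yourself flag this as ``the hard part'', and indeed it is the whole content of the proposition.

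Your fallback in paragraph~3 is exactly the paper's route, but the paper makes the ``Bernstein component is locally constant in the family'' step precise rather than asserting it. Namely: both eigenvarieties carry $2$-dimensional pseudo-representations $T,T'$ of $G_{\Q,S}$ with $T'=h^*\circ T$; by \cite[Lemma~7.8.18]{BC} the restriction $T|_{I_l}$ is constant on connected components of $\D(\til{e})$ for $l\in S$. Choosing any classical point $x\in\D(\til{e}')$ on the same connected component as $z$ (such points exist by Zariski-density), one gets $T'_z|_{I_l}=T'_x|_{I_l}$, hence the inertial Weil--Deligne parameter of $\til{\pi}_l$ agrees with that of $(\til{\pi}_x)_l$. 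Local--global compatibility together with the inertial local Langlands correspondence (the paper cites \cite[Appendix~1.2]{BMH}) then forces $\til{\pi}_l$ and $(\til{\pi}_x)_l$ to lie in the same Bernstein component, and since $(\til{\pi}_x)_l$ is detected by $\til{e}'_l$, so is $\til{\pi}_l$. Your sketch has the right shape; what is missing is the identification of the invariant that is actually constant (the restriction to inertia of the pseudo-representation) and the bridge back to Bernstein components (inertial local Langlands).
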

\begin{proof}
We have to show that there exists a classical automorphic eigenform $f \in M(\til{e}',\til{\omega}(z))^{cl}$ with system of Hecke eigenvalues $\psi_z$. By construction of the eigenvarieties there exists an overconvergent eigenform $f_{oc} \in M(\til{e}', \til{\omega}(z))$ with system of Hecke eigenvalues given by $\psi_z$. By assumption the point $h(z)$ is classical, so there exists an eigenform $g \in M(\til{e},\til{\omega}(z))^{cl}$ for the same system of Hecke eigenvalues $\psi_{h(z)}=\psi_z$. In particular, we have a $p$-refined automorphic representation $\til{\pi}_{h(z)}$ giving rise to $h(z)$.
Both eigenvarieties $\D(\til{e}')$ and $\D(\til{e})$ carry pseudo-representations $T'$ and $T$ (see Prop.\ 3.10 of \cite{p-adicLL}) and $T'= h^* \circ T$, where $h^*: \mathcal{O}(\D(\til{e})) \rightarrow \mathcal{O}(\D(\til{e}'))$ denotes the homomorphism induced by $h$. For $l \in S$ let $I_l$ be the inertia subgroup of $\Gal(\Qbar_l/\Q)$.
By Lemma 7.8.18 of \cite{BC}, $T|_{I_l}$ is constant on the connected components of $\D(\til{e})$. Let $x \in \D(\til{e}')$ be a classical point on the same connected component as $z$. 
Then $T'_x|_{I_l}= T'_{h(z)}|_{I_l} = T_z|_{I_l}$. 
Local-global compatibility, compatibility of the local Jacquet--Langlands transfer with twists and the inertial local Langlands correspondence (see Appendix 1.2 of \cite{BMH}) imply that the local components ${(\til{\pi}_{h(z)})}_l$ and ${(\til{\pi}_x)}_l$ are in the same Bernstein component. Therefore $\til{e}'_l \cdot {(\til{\pi}_{h(z)})}_l \neq 0$. 
\end{proof}

The situation for eigenvarieties of the group $G$ is different. First of all we have the following result:
\begin{prop} In the notation of Theorem \ref{mainthm}, the point $x_1 \in \D(e_1)(\Qbar_p)$ is not classical.
\end{prop}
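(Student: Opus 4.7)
The plan is to argue by contradiction. Suppose $x_1 \in \D(e_1)(\Qbar_p)$ is classical. By Definition \ref{clpt}, there is a classical eigenform $f \in M(e_1, \omega(x_1))^{cl}$ with $h \cdot f = \psi_{x_1}(h) f$ for all $h \in \cH_S$. Such an $f$ corresponds to an automorphic representation $\sigma$ of $G(\A)$ with $e_1 \cdot \sigma_f^p \neq 0$, with $\sigma_\infty$ matching the weight $\omega(x_1) = k_1 - k_2$, and with $\sigma_p$ admitting an Iwahori-fixed vector on which $u_0$ acts by the scalar $\psi_{x_1}(u_0) = \psi_{\til{x}}(u_0)$.

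The key step is to pin down $\sigma$. For every $l \notin S = \{p, q\}$ the factor $e_l = e_{\SL_2(\Z_l)}$ forces $\sigma_l$ to be $\SL_2(\Z_l)$-spherical, and the matching of $\cH_{ur,S}$-eigenvalues between $\sigma$ and $\pi(\til{\theta})$ forces $\sigma_l \cong \pi(\til{\theta})^0_l$ at all such $l$. By strong multiplicity one for $L$-packets of $\SL_2$ (Labesse--Langlands), this is enough to place $\sigma$ inside the global $L$-packet $\Pi(\pi(\til{\theta}))$. At $q$, since $\tau_1$ is supercuspidal and supercuspidal representations of $\SL_2(\Q_q)$ lying in a common Bernstein component are isomorphic (as recorded in Section 2), the condition $e_{q,1} \cdot \sigma_q \neq 0$ forces $\sigma_q \cong \tau_1$. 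Combined with the infinity type and the unramified local components elsewhere, this identifies $\sigma = \pi_1$ within the packet $\Pi(\pi(\til{\theta})) = \{\pi_1, \pi_2\}$.

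This would mean that $\pi_1$ is automorphic, contradicting $m(\pi_1) = 0$ built into the choice of $\til{\theta}$. Hence $x_1$ is not classical. I expect the main technical point to be the step placing $\sigma$ inside $\Pi(\pi(\til{\theta}))$ from Hecke-eigenvalue data; if one wants to avoid appealing directly to Labesse--Langlands strong multiplicity one, an alternative is to proceed through Galois representations by evaluating the pseudo-representation of Lemma \ref{galoisreps} at $x_1$, comparing with $\operatorname{Sym}^2 \rho_{\pi(\til{\theta})} \otimes \det(\rho_{\pi(\til{\theta})})^{-1}$, and invoking Chebotarev density to match the associated projective Galois representation of any hypothetical lift of $\sigma$ to $\til{G}$ with that of $\pi(\til{\theta})$.
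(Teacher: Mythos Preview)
Your proposal is correct and follows essentially the same line as the paper's proof: argue by contradiction, use the Hecke eigenvalues away from $S$ to place the hypothetical automorphic representation inside the global $L$-packet $\Pi(\pi(\til{\theta}))$, then use the condition at $q$ (supercuspidals of $\SL_2(\Q_q)$ in the same Bernstein component are isomorphic) to identify it with $\pi_1$, contradicting $m(\pi_1)=0$. The only cosmetic difference is that the paper cites Ramakrishnan's Theorem~4.1.2 for the ``strong multiplicity one for $L$-packets'' step, whereas you invoke Labesse--Langlands directly; either reference works here, and the paper also makes explicit that $\Pi(\pi(\til{\theta})_p)$ is a singleton to close off any ambiguity at $p$, which you have absorbed into the statement that the global packet is $\{\pi_1,\pi_2\}$.
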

\begin{proof} Assume $x_1$ is classical. Then there exists an automorphic representation $\pi$ of $G(\A)$ such that 
\begin{equation} \label{e_1}
e_1 \cdot \pi^p_f\neq 0, \end{equation} 
in particular $\pi_l^{\SL_2(\Z_l)}\neq 0$ for all $l \notin \{p,q\}$. The system of Hecke eigenvalues $\psi_{\til{x}}|_{\mathcal{H}_{ur,S(\til{e})}}$ determines the representation $\pi_l$ with $\pi_l^{\SL_2(\Z_l)}\neq 0 $ and the local $L$-packet $\Pi_l=\Pi(\pi(\til{\theta})_l)$ uniquely. But this implies $\pi \in \Pi(\pi(\til{\theta}))$ (cf.\ Theorem 4.1.2 of \cite{Ramakrishnan}). Condition (\ref{e_1}) implies that $\pi_q = \tau_1$, so the only choice left might be at $p$. But by construction $\Pi(\pi(\til{\theta})_p)= \{\pi_{1,p}\}$ is a singleton and therefore $\pi \cong \pi_1$. But $m(\pi_1)=0$.          
\end{proof}

\begin{rem} It is obvious that one cannot produce non-classical points starting from Gr\"o{\ss}encharacters of an imaginary quadratic field $L$ in which $p$ is inert. There are multiple reasons for this. For example, note that any `candidate for $x_1$' that one would end up constructing would automatically be classical by the classicality theorem. The critical slope for $u_0$ is given by $2(k_1-k_2+1)$ and Lemma \ref{inertslope} implies that the slope of the $u_0$-eigenvalue of any candidate is $k_1-k_2+1$.  
\end{rem}

\begin{corollary} Let $\D(e)$ be an eigenvariety of idempotent type $e$ for $G$ and assume $z \in \D(e)(\Qbar_p)$ is a point whose system of Hecke eigenvalues 
$\psi_z$ comes from a classical automorphic representation $\til{\pi}$ of $\til{G}(\A)$. Then $z$ is not necessarily classical.
\end{corollary}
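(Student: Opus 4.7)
The plan is to produce this as an immediate consequence of the results already established in Section 4. The idea is that the point $x_1 \in \D(e_1)(\Qbar_p)$ constructed in Theorem \ref{mainthm} is already the desired counterexample.

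First, I recall that $x_1$ was built so that its system of Hecke eigenvalues equals
\[
\psi_{x_1} = \psi_{\til{x}}|_{\mathcal{H}_S},
\]
where $\til{x} \in \D(\til{e})(\Qbar_p)$ is the critical slope refinement of the classical automorphic representation $\pi(\til{\theta})$ of $\til{G}(\A)$. In particular, $\psi_{x_1}$ manifestly arises from the classical automorphic representation $\pi(\til{\theta})$ of $\til{G}(\A)$, as required by the hypothesis of the corollary.

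Second, the immediately preceding proposition establishes exactly the nonclassicality: by the multiplicity formulae, the only representation in $\Pi(\pi(\til{\theta}))$ compatible with the idempotent $e_1$ at the place $q$ and with the local $L$-packet data at every finite place is $\pi_1$, and $\pi_1$ has been arranged to satisfy $m(\pi_1) = 0$, so no classical eigenform on the $G$-side can give rise to $\psi_{x_1}$. Thus taking $(\D(e), z) := (\D(e_1), x_1)$ furnishes a point that satisfies the hypothesis of the corollary but is not classical. Since the statement only asserts that classicality need not hold, no further argument is needed.

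The only thing to be careful about is that the corollary is phrased at the level of the pair $(\D(e), z)$, so the proof is literally one sentence: ``Take $(\D(e_1), x_1)$ from Theorem \ref{mainthm}; by the previous proposition this is the required example.'' There is no genuine obstacle here — all the real work has already been done in constructing $x_1$ and in verifying its non-classicality.
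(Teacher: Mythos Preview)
Your proposal is correct and matches the paper's approach exactly: the corollary is stated without proof in the paper precisely because it is an immediate consequence of the preceding proposition applied to the pair $(\D(e_1), x_1)$ from Theorem~\ref{mainthm}. Your identification of $\pi(\til{\theta})$ as the classical automorphic representation of $\til{G}(\A)$ furnishing $\psi_{x_1}$ and the invocation of the non-classicality proposition is all that is required.
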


\begin{rem} Note however that one can always enforce classicality by passing to a suitable idempotent (e.g., the idempotent attached to a sufficiently small compact open subgroup $K\subset G(\A_f^p)$). In our example, the image of the non-classical point~$x_1$ under the map $\D(e_1) \hookrightarrow \D(e)$ from the proof of Theorem \ref{mainthm} is classical. The analogue of Proposition \ref{gl2} for $G$ is therefore false.
\end{rem}

\bibliography{packets}

\begin{thebibliography}{10}

\bibitem{BC}
Jo{\"e}l Bella{\"{\i}}che and Ga{\"e}tan Chenevier.
\newblock Families of {G}alois representations and {S}elmer groups.
\newblock {\em Ast\'erisque}, 324:xii+314, 2009.

\bibitem{BMH}
Christophe Breuil and Ariane M{\'e}zard.
\newblock Multiplicit\'es modulaires et repr\'esentations de {${\rm GL}_2({\bf
  Z}_p)$} et de {${\rm Gal}(\overline{\bf Q}_p/{\bf Q}_p)$} en {$l=p$}.
\newblock {\em Duke Math. J.}, 115(2):205--310, 2002.
\newblock With an appendix by Guy Henniart.

\bibitem{bushnell}
Colin~J. Bushnell and Philip~C. Kutzko.
\newblock Smooth representations of reductive {$p$}-adic groups: structure
  theory via types.
\newblock {\em Proc. London Math. Soc. (3)}, 77(3):582--634, 1998.

\bibitem{kevin}
Kevin Buzzard.
\newblock Eigenvarieties.
\newblock In {\em {$L$}-functions and {G}alois representations}, volume 320 of
  {\em London Math. Soc. Lecture Note Ser.}, pages 59--120. Cambridge Univ.
  Press, Cambridge, 2007.

\bibitem{chenevier}
Ga{\"e}tan Chenevier.
\newblock Familles {$p$}-adiques de formes automorphes pour {${\rm GL}_n$}.
\newblock {\em J. Reine Angew. Math.}, 570:143--217, 2004.

\bibitem{Gelbart}
Stephen~S. Gelbart.
\newblock {\em Automorphic forms on ad\`ele groups}.
\newblock Princeton University Press, Princeton, N.J.; University of Tokyo
  Press, Tokyo, 1975.
\newblock Annals of Mathematics Studies, No. 83.

\bibitem{JL}
H.~Jacquet and R.~P. Langlands.
\newblock {\em Automorphic forms on {${\rm GL}(2)$}}.
\newblock Lecture Notes in Mathematics, Vol. 114. Springer-Verlag, Berlin-New
  York, 1970.

\bibitem{LL}
J.-P. Labesse and R.~P. Langlands.
\newblock {$L$}-indistinguishability for {${\rm SL}(2)$}.
\newblock {\em Canad. J. Math.}, 31(4):726--785, 1979.

\bibitem{david}
David Loeffler.
\newblock Overconvergent algebraic automorphic forms.
\newblock {\em Proc. Lond. Math. Soc. (3)}, 102(2):193--228, 2011.

\bibitem{p-adicLL}
Judith Ludwig.
\newblock A {$p$}-adic {L}abesse--{L}anglands transfer.
\newblock 2014.
\newblock arXiv:1412.4140.

\bibitem{Ramakrishnan}
Dinakar Ramakrishnan.
\newblock Modularity of the {R}ankin-{S}elberg {$L$}-series, and multiplicity
  one for {${\rm SL}(2)$}.
\newblock {\em Ann. of Math. (2)}, 152(1):45--111, 2000.

\end{thebibliography}
\bibliographystyle{plain}

\end{document}